\let\origsection=\section \def\section{\@ifstar{\origsection*}{\mysection}}
\def\mysection{\@startsection{section}{1}\z@{.7\linespacing\@plus\linespacing}{.5\linespacing}{\normalfont\scshape\centering\S}}
\renewcommand{\PrintDOI}[1]{\doi{#1}}
\numberwithin{equation}{section}
\numberwithin{figure}{section}
\def\rmlabel{\upshape({\itshape \roman*\,})}
\def\alabel{\upshape({\itshape \alph*\,})}
\let\polishlcross=\l
\def\l{\ifmmode\ell\else\polishlcross\fi}
\def\tand{\ \text{and}\ }
\def\qand{\quad\text{and}\quad}
\def\qqand{\qquad\text{and}\qquad}
\let\emptyset=\varnothing
\let\setminus=\smallsetminus
\def\moverlay{\mathpalette\mov@rlay}
\def\mov@rlay#1#2{\leavevmode\vtop{   \baselineskip\z@skip \lineskiplimit-\maxdimen
   \ialign{\hfil$\m@th#1##$\hfil\cr#2\crcr}}}
\newcommand{\charfusion}[3][\mathord]{
    #1{\ifx#1\mathop\vphantom{#2}\fi
        \mathpalette\mov@rlay{#2\cr#3}
      }
    \ifx#1\mathop\expandafter\displaylimits\fi}
\newcommand{\dcup}{\charfusion[\mathbin]{\cup}{\cdot}}
\DeclareFontFamily{U}  {MnSymbolC}{}
\DeclareSymbolFont{MnSyC}         {U}  {MnSymbolC}{m}{n}
\DeclareFontShape{U}{MnSymbolC}{m}{n}{
    <-6>  MnSymbolC5
   <6-7>  MnSymbolC6
   <7-8>  MnSymbolC7
   <8-9>  MnSymbolC8
   <9-10> MnSymbolC9
  <10-12> MnSymbolC10
  <12->   MnSymbolC12}{}
\DeclareMathSymbol{\powerset}{\mathord}{MnSyC}{180}
\let\epsilon=\varepsilon
\let\eps=\epsilon
\let\theta=\vartheta
\let\phi=\varphi
\def\EE{{\mathds E}}
\def\NN{{\mathds N}}
\def\PP{{\mathds P}}
\newcommand{\cQ}{\mathcal{Q}}
\newcommand{\cR}{\mathcal{R}}
\theoremstyle{plain}
\newtheorem{thm}{Theorem}[section]
\newtheorem{cor}[thm]{Corollary}
\newtheorem{lemma}[thm]{Lemma}
\theoremstyle{definition}
\newtheorem{dfn}[thm]{Definition}
\def\lra{\longrightarrow}
\DeclareMathOperator{\Bin}{Bin}
\begin{document}

\title[Size-Ramsey number of grids]
{On the size-Ramsey number of grid graphs}

\author[D.~Clemens]{Dennis Clemens}
\address{Institut f\"ur Mathematik, TU Hamburg, Hamburg, Germany}
\email{dennis.clemens@tuhh.de} 

\author[M.~Miralaei]{Meysam Miralaei}
\address{Department of Mathematics, Isfahan University of Technology, Isfahan, Iran}
\email{m.miralaei@math.iut.ac.ir}
\thanks{M.~Miralaei was supported by the Ministry of Science, Research and
Technology of Iran and part of the research was carried out during a visit
at University of Hamburg.}

\author[D.~Reding]{Damian Reding}
\address{Institut f\"ur Mathematik, TU Hamburg, Hamburg, Germany}
\email{damian.reding@tuhh.de}

\author[M.~Schacht]{Mathias Schacht}
\address{Fachbereich Mathematik, Universit\"at Hamburg, Hamburg, Germany}
\curraddr{Department of Mathematics, Yale University, New Haven, USA}
\email{schacht@math.uni-hamburg.de}
\thanks{M.~Schacht was partly supported by the 
European Research Council (PEPCo 724903).}

\author[A.~Taraz]{Anusch Taraz}
\address{Institut f\"ur Mathematik, TU Hamburg, Hamburg, Germany}
\email{taraz@tuhh.de}

\begin{abstract}
The size-Ramsey number of a graph $F$ is the smallest number of edges in a graph $G$ with the Ramsey property for $F$, that is, with the property that any 2-colouring of the edges of $G$ contains a monochromatic copy of $F$. We prove that the size-Ramsey number of the grid graph on $n\times n$ vertices is bounded from above by $n^{3+o(1)}$. 
\end{abstract}

\maketitle

\section{Introduction and Results}

For two graphs $F$ and $G$, we 
say that $G$ is \textit{Ramsey for $F$} and write $G\lra F$, 
if every 2-colouring of the edges of $G$ yields  a monochromatic copy of $F$. 
Erd\H{o}s, Faudree, Rousseau, and Schelp~\cite{EFRS} defined the \emph{size-Ramsey number} $\hat{r}(F)$ of $F$ to be the smallest integer $m$ such that there exists a graph $G$ with $m$ edges that is Ramsey for $F$, i.e., 
\[
	\hat{r}(F)=\min\{e(G)\colon G\lra F\}\,.
\] 
Addressing a question posed by Erd\H os~\cite{Er81}, 
Beck~\cite{B83} proved that the size-Ramsey number of the path $P_n$ is linear in $n$ by means of a probabilistic construction and 
Alon and Chung~\cite{AC88} later gave an explicit construction. Beck's proof gave $\hat{r}(P_n)\leq 900 n$ and this upper 
bound was improved several times~\cites{Bo-book,DP,Le16} by simplified and refined probabilistic constructions. 
Currently, the best known upper bound of the form $\hat{r}(P_n)\leq 74 n$ 
is due to Dudek and Pra\l at~\cite{DP-SIAM}. The size-Ramsey number was studied for other graphs than paths including
cycles~\cites{B83,B90,HKL,JKOP}, powers of paths and cycles~\cite{CJKMMRR}, and trees of bounded degree~\cites{B83,B90,FP,HK,Dell}. 
This line of 
research was inspired by a question of Beck~\cite{B90}, whether the size-Ramsey number grows linearly in the number of vertices for 
graphs of bounded degree. In fact, for the graph classes mentioned so far this question was answered affirmatively. However, 
R\"odl and Szemer\'edi~\cite{RSz} gave an example of a sequence of $3$-regular, $n$-vertex graphs $(F_n)_{n\in\NN}$ for which 
they could establish
\[
\hat{r}(F_n)\geq n\log^cn
\]
for some $c>0$.

Moreover, they conjectured that for every $\Delta\ge 3$ there exists an $\eps>0$ such that for every sufficiently large $n$ we have 
\[
	n^{1+\eps} 
	\le 
	\max \hat r(F)
	\le 
	n^{2-\eps}\,,
\]
where the maximum is taken over all $n$-vertex graphs $F$ with maximum degree $\Delta(F)\leq \Delta$.
The upper bound of this conjecture was confirmed by Kohayakawa, R\"odl, Schacht, and Szemer\'edi~\cite{KRSSz}
for any $\eps<1/\Delta$. 
The proof was also based on a probabilistic construction. More precisely, it was shown in~\cite{KRSSz} that for appropriate constants $a>0$ and $C>1$ and~$p=C(\log n/n)^{1/\Delta}$ the random graph $G(n,p)$ asymptotically almost surely has the property $G(n,p)\lra F$
for every $an$-vertex graph $F$ with maximum degree at most $\Delta$.
We remark that the edge probability~$p$ is chosen in such a way that 
any set of $\Delta$ vertices in~$G(n,p)$ has some joint neighbours, which allows 
for a ``greedy type'' embedding strategy for a graph $F$ with maximum degree $\Delta$. 
Recently, Conlon and Nenadov~\cite{CN} managed to overcome this natural barrier for triangle-free graphs $F$ on $n$ vertices 
with $\Delta(F)\leq \Delta$ and $\Delta\geq5$, by showing that
\begin{equation}
\label{eq:CN}
\hat r(F) = O( n^{2-\frac{1}{\Delta-0.5}}\log^\frac{1}{\Delta-0.5} n )\,.
\end{equation}

We focus on $2$-dimensional grids. The $s\times t$ \textit{grid graph} $G_{s,t}$ is defined on the vertex set~$[s]\times [t]$ 
with edges $uv$ present, whenever $u$ and $v$ differ in exactly one coordinate by exactly one. 
For the square grid $G_{n,n}$ on $n^2$ vertices
the upper bounds arising from~\cite{KRSSz} and~\eqref{eq:CN} are of the order $n^{7/2+o(1)}$ and $n^{24/7+o(1)}$, respectively,
if we choose to ignore the restriction $\Delta\ge 5$ in ~(\ref{eq:CN}) for the moment. Our main result improves these upper bounds to $n^{3+o(1)}$ (see Corollary~\ref{cor:main}). 

\begin{thm}
\label{thm:main}
	For all $\alpha' >0$ there exist $c>0$ and $ C\geq 1 $ such that 
for $p\ge C(\log n/n)^{1/2}$
	a.a.s.\ $G\in G(n, p)$ satisfies the following.
	Every subgraph $H\subseteq G$ with $e(H)\geq\alpha' e(G)$ contains a copy of 
	$G_{s, s}$ for any $s\leq \frac{c}{p}$.
\end{thm}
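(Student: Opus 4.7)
The strategy is to reduce, via the sparse regularity lemma applied to $H$, to embedding the grid into a single super-regular bipartite pair, and then to exploit the $2$-degeneracy of the grid through a randomised greedy embedding.

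First, I would apply the sparse regularity lemma for subgraphs of $G(n,p)$ to $H$ with some $\eps\ll\alpha'$, obtaining a partition $V(G)=V_0\cup V_1\cup\dots\cup V_k$ with $|V_i|=N=(1+o(1))n/k$ and at most $\eps\binom{k}{2}$ non-$(\eps,p)$-regular pairs in $H$. Since a.a.s.\ $e(G)=(1+o(1))p\binom{n}{2}$, the hypothesis $e(H)\ge\alpha'e(G)$ forces the existence of a regular pair $(V_a,V_b)$ of normalised density at least $\alpha'/2$. A standard cleaning yields sub-clusters $U_a\subseteq V_a$, $U_b\subseteq V_b$ of size $(1-o(1))N$ on which the pair is super-regular, and on which a defect-inheritance argument guarantees that (essentially) every pair of vertices on one side has common $H$-neighbourhood on the other side of size $(1\pm o(1))(\alpha'/2)^2p^2N=\Theta(\log n)$; the few atypical vertices are discarded at a cost of $o(N)$.

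Next, I would embed $G_{s,s}$ into $(U_a,U_b)$. The grid is bipartite by coordinate parity, with each colour class of size at most $s^2/2\le c^2n/(2\log n)\ll N$, so both clusters are large enough. Ordering the grid vertices row by row gives each $v_t$ at most two earlier neighbours, all lying in the opposite colour class; I embed in this order, placing $\phi(v_t)$ uniformly at random among the currently unused common $H$-neighbours of its earlier neighbours in the appropriate cluster.

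The main obstacle is to prevent this random greedy process from exhausting its pool. Over the whole process only $s^2=O(n/\log n)$ vertices are used out of clusters of size $\Theta(n)$, so any fixed vertex is used with probability $O(1/\log n)$, and the expected overlap between the used set and a $\Theta(\log n)$-sized common neighbourhood is only $\Theta(1)$. The plan is to upgrade this expectation to an almost-sure bound of $o(\log n)$ on the overlap \emph{at every step}, via a Freedman- or Azuma-type martingale concentration on the running used-vertex count inside each upcoming common neighbourhood, together with a union bound over the $O(n/\log n)$ embedding steps. The principal technical delicacy is the two-way dependence between the evolving common neighbourhoods and the random choices, which would be handled by revealing the embedding step by step, controlling conditional probabilities that each candidate vertex has already been chosen, and exploiting the row-by-row order so that the earlier-neighbour pairs referenced at step $t$ are typical with very high probability.
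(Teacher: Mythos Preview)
Your reduction via the sparse regularity lemma to a single dense pair matches the paper. The gap is in the embedding.

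The assertion that, after discarding $o(N)$ atypical \emph{vertices}, essentially every \emph{pair} on one side has common $H$-neighbourhood of size $(1\pm o(1))(\alpha'/2)^2p^2N$ is not attainable. Super-regularity controls degrees of single vertices, but a common $H$-neighbourhood of a pair has size only $\Theta(\log n)$, and the adversary choosing $H\subseteq G$ can annihilate the common $H$-neighbourhood of any prescribed pair $(u,v)$ by deleting the $\Theta(\log n)$ edges from $u$ into $N_G(u)\cap N_G(v)$ without noticeably affecting either vertex's degree. The defective objects are pairs, not vertices, so no vertex-deletion cleaning repairs this; during your process the pair $\big(\phi(i-1,j),\phi(i,j-1)\big)$ may simply have been engineered to have empty common $H$-neighbourhood, and you offer no mechanism to steer the random choices away from such pairs. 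Even granting large common $H$-neighbourhoods, the concentration step is not as described: over $T=\Theta(n/\log n)$ bounded increments, Azuma yields a tail of $\exp\big(-\Theta(\log^3 n/n)\big)$, which is useless, while Freedman needs the sum of conditional variances to be $O(\log n)$, i.e.\ $|S\cap C_t|/|C_t|=O(\log n/n)$ uniformly, where both $S$ and $C_t$ are common $H$-neighbourhoods of \emph{adaptively} chosen pairs. That uniform structural control is precisely the hard statement a proof must supply, and it is absent from your sketch.

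The paper avoids both obstacles by never invoking common neighbourhoods of pairs. It embeds the grid \emph{row by row}: having placed a path $P_t$ whose every edge $x_ix_{i+1}$ lies in a set $\cR_H\cap\cQ_H$ of edges for which $\big(N_H(x_i,A),N_H(x_{i+1},B)\big)$ inherits $(\eps',\alpha,p)$-denseness, it finds $P_{t+1}$ as a path threading the sets $N_H(x_i)$, using the inherited dense pairs rather than pairwise co-degrees. The existence of enough such good edges is secured by a matching lemma built on the sparse regularity inheritance theorem (Gerke--Kohayakawa--R\"odl--Steger), and this replaces the martingale analysis entirely.
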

A simple first moment calculation shows that already for $p=cn^{-\frac{1}{2}}$ with $c<1$ the random graph~$G(n,p)$ asymptotically almost surely does not contain a square grid of size logarithmic in~$n$, which shows that the condition on $p$ is almost optimal. 
Theorem~\ref{thm:main} has the following immediate consequence for the size-Ramsey number of square grids. 
\begin{cor}
\label{cor:main} The size-Ramsey number of the $n\times n$ square grid satisfies
$
\hat r( G_{n,n} ) 
\leq 
n^{3 + o(1)}.$
\end{cor}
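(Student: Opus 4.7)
The plan is to extract Corollary~\ref{cor:main} directly from Theorem~\ref{thm:main} by exhibiting an explicit random host graph on $N = \Theta(n^{2}\log n)$ vertices and then counting its edges. I would apply Theorem~\ref{thm:main} with $\alpha' = 1/2$ to obtain constants $c>0$ and $C\geq 1$. Since the conclusion of the theorem yields all grids $G_{s,s}$ with $s\leq c/p$, I want to take $p := c/n$ so that $G_{n,n}$ is forced. To make the hypothesis $p\geq C(\log N/N)^{1/2}$ compatible with this choice, I pick $N$ to be the smallest integer for which $C(\log N/N)^{1/2}\leq c/n$, which is solved by $N=\Theta(n^{2}\log n)$.

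With these parameters, Theorem~\ref{thm:main} guarantees that a.a.s.\ the random graph $G\in G(N,p)$ has the property that every subgraph $H\subseteq G$ with $e(H)\geq e(G)/2$ contains a copy of $G_{n,n}$. Given any 2-colouring of $E(G)$, the majority colour class satisfies this hypothesis, so $G\lra G_{n,n}$. It remains to bound $e(G)$: a standard Chernoff estimate yields a.a.s.
\[
	e(G) \leq N^{2}p = O(n^{3}\log^{2} n) = n^{3+o(1)},
\]
and hence $\hat r(G_{n,n})\leq e(G)\leq n^{3+o(1)}$.

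The one piece of genuine content is verifying that the two constraints on $p$, namely the lower bound $p\geq C(\log N/N)^{1/2}$ and the upper bound $p\leq c/n$ needed to force a grid of side $n$, are simultaneously feasible; this is why $N$ has to be taken slightly super-quadratic in $n$. The resulting inflation of $e(G)$ is only a polylogarithmic factor $\log^{2}n = n^{o(1)}$, which is harmlessly absorbed into the $n^{3+o(1)}$ bound. Beyond this routine parameter bookkeeping, the corollary is essentially immediate from Theorem~\ref{thm:main}, so the real work lies in the probabilistic embedding underpinning the theorem itself rather than in this deduction.
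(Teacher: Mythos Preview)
Your deduction is correct and is exactly the intended (and only natural) route: the paper does not spell out a proof of the corollary but simply calls it an ``immediate consequence'' of Theorem~\ref{thm:main}, and your parameter choice $p=c/n$ together with $N=\Theta(n^{2}\log n)$, the majority-colour argument, and the Chernoff bound on $e(G)$ fill in precisely the details the authors left implicit.
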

There is not much evidence that $n^{3}$ is the right order of magnitude for $\hat{r}(G_{n,n})$. 
For the sake of a simpler presentation,
we therefore have made no attempt to strengthen Theorem~\ref{thm:main} in such a way that would allow us to 
remove the $o(1)$ term in the upper bound in 
Corollary~\ref{cor:main}. However, it seems very likely that a more careful analysis in the proof of Theorem~\ref{thm:main} would allow such an improvement.

\section{Preliminaries}
For a graph $G$, we write $V(G)$ and $E(G)$ and $e(G)$ 
for the vertex set, edge set and the number of edges of $G$, 
respectively. 
Given $ v \in V(G) $, by $ N_{G}(v) $ we mean the set of all neighbours 
of $ v $ and set $ d_{G}(v)=|N_{G}(v) | $.
We use the standard notation $\Delta (G)$ and~$\delta (G)$ for the 
maximum and minimum degree of vertices in  $G$, respectively.
For a vertex $ v\in V(G) $ and a subset $ X\subseteq V(G) $, 
let $ N_G(v,X) $ denote the set of neighbours of $ v $ in $X$.
Given a subset~$X\subseteq V(G) $, we let $ G[X] $ be the subgraph of $G$ that is induced by $ X $. 
We write~$G-X $ for $ G[V(G)\setminus X] $.
For subsets $X$, $Y\subseteq V(G)$, we define $G[X,Y]$ to be the subgraph of $G$ on vertex set $X\cup Y$ 
with edges $xy$ where $x\in X$ and $y\in Y$.  We denote by $E_G(X,Y)$ its edge set and set
$e_G(X ,Y):=|E_G(X,Y)|$.

For real numbers $x$, $y$, $\delta >0$, we write $ x=(1\pm \delta)y $ 
if $(1-\delta)y < x < (1+\delta)y,$
and for every integer $k$ we denote by $[k]$ the set of the first $k$ positive integers $\{1,\dots,k\}$.

The binomial random graph $G(n,p)$ is defined on the vertex set $[n]$ that is obtained by pairwise independently
including each of the possible $\binom{n}{2}$ edges with probability $p=p(n)$. We say that an event holds 
\textit{asymptotically almost surely} (abbreviated \textit{a.a.s.}) in $G(n, p)$, if its probability tends to $1$ as $n$ tends to infinity.

The proof of Theorem~\ref{thm:main} is based on the \emph{regularity lemma for subgraphs of sparse random graphs} 
(see Theorem~\ref{reg thm} below), which was introduced by 
Kohayakawa and R\"odl~\cites{MR1661982,MR1980964} and below we introduce the required notation. 
Let $H=(V,E)$ be a graph and let~$p\in(0,1]$ be given. 
Suppose that $K>1$ and $\eta >0 $.
For nonempty subsets $X$, $Y\subseteq V$, we consider the  
\textit{$p$-density} of the pair $(X,Y)$ defined by
 \[
        d_{H,p}(X,Y)=\dfrac{e_H(X,Y)}{p |X| |Y|}\,.
 \]
We say that $H$ is a \textit{$(\eta ,K)$-bounded} with 
respect to the density $p$ if for all pairwise disjoint sets 
$X$, $Y \subseteq V$ with $|X|$, $|Y| \geq \eta |V|$, we have
 \[
		e_H(X,Y)\leq Kp|X| |Y|\,. 
 \]
Given $\eps>0$ and disjoint nonempty subsets $X$, $Y\subseteq V$, 
we say that the pair $(X,Y)$ is \textit{$(\eps ,p)$-regular} if 
for all $X'\subseteq X$ and $Y'\subseteq Y$ satisfying
 \[
 		|X'| \geq \eps |X|  \qand |Y'| \geq \eps |Y|,
 \]
we have
 \[
 		\big| d_{H,p}(X,Y)-d_{H,p}(X',Y')\big| \leq \eps.
 \]
Note that for $p=1$ we recover
the well-known definition of $\eps$-regular pairs in the context of  
Szemer\'{e}di's regularity lemma~\cite{MR540024}.

\begin{dfn}
Given a real number $ \eps >0 $, a positive integer $t$ and a graph $H=(V,E)$, we say that a partition 
$ \{V_{i}\}_{i=0}^{t}$ of $ V $ is
\textit{$ (\eps , p)$-regular} if
 	\begin{enumerate}[label=\rmlabel]
		\item $ |V_{0}|  \leq \eps |V|$,
		\item $|V_{1}| = \dots =| V_{t}|$,
		\item all but at most $\eps \binom{t}{2}$ pairs 
			$(V_{i},V_{j})$ with $ 1\leq i<j \leq t$
	        are $ (\eps ,p)$-regular. 
	\end{enumerate}
The vertex class $ V_{0} $ is referred to as the 
\textit{exceptional set}. 
\end{dfn}
 
The following is a variant of the 
Szemer\'{e}di Regularity Lemma~\cite{MR540024} for sparse graphs.

\begin{thm}[Sparse Regularity Lemma]
\label{reg thm}
	For any $\eps >0$, $K>1$, and $t_{0}\geq 1$, there  exist 
	constants  $T_{0}$, $\eta>0$, and $ N_{0} $ 
	such that any graph $H$ that has least $N_{0}$ vertices and that 
	is $(\eta ,K)$-bounded with respect to some density $p\in(0,1]$, 
	admits an $(\eps ,p)$-regular partition~$\{V_{i}\}_{i=0}^{t}$ 
	of its vertex set with $t_{0}\leq t \leq T_{0}$.\qed
\end{thm}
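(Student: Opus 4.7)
The plan is to prove the Sparse Regularity Lemma by adapting the classical energy-increment proof of Szemer\'edi's regularity lemma to the $p$-normalized density $d_{H,p}$. The three key ingredients are a mean-square \emph{index} of a partition, an upper bound on this index derived from the $(\eta,K)$-boundedness hypothesis, and a defect Cauchy--Schwarz argument showing that a non-regular partition can be refined so that the index rises by a constant depending only on $\eps$.

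First I would define, for a partition $\mathcal{P}=\{V_i\}_{i=0}^{t}$ of $V(H)$ with exceptional part $V_0$, the $p$-index
\[
q_p(\mathcal{P}) \;=\; \sum_{1 \leq i < j \leq t} \frac{|V_i|\,|V_j|}{|V|^2}\, d_{H,p}(V_i, V_j)^2.
\]
Whenever the partition is equitable with all parts of size at least $\eta|V|$, the $(\eta,K)$-boundedness of $H$ forces $d_{H,p}(V_i,V_j) \leq K$ for every pair $i\neq j$, and hence $q_p(\mathcal{P}) \leq K^2/2$. This boundedness step replaces the trivial inequality $d(V_i,V_j)\leq 1$ available in the dense setting and is the only place where the hypothesis on $H$ is used.

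Next I would prove the refinement lemma: if an equitable partition $\mathcal{P}$ with parts of size $\geq \eta|V|$ is not $(\eps,p)$-regular, then there is a refinement $\mathcal{P}'$ in which each $V_i$ with $i\geq 1$ is cut into at most $2^{t-1}$ blocks and that satisfies $q_p(\mathcal{P}') \geq q_p(\mathcal{P}) + \eps^5/2$. The standard defect-Cauchy--Schwarz computation carries over verbatim to the $p$-normalized setting: for each of the at least $\eps\binom{t}{2}$ irregular pairs $(V_i,V_j)$, splitting $V_i$ and $V_j$ according to the witnesses $X\subseteq V_i$, $Y\subseteq V_j$ boosts the contribution to the index by at least $\eps^4\,|V_i||V_j|/|V|^2$, and summing over bad pairs yields the claimed global increment. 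Sub-parts that would drop below size $\eta|V|$ after refinement are absorbed into $V_0$, and a subsequent equipartition step only costs a negligible fraction of the index; both adjustments require $N_0$ to be sufficiently large.

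Finally I would iterate. Since $q_p$ is monotone non-decreasing under refinement, bounded above by $K^2/2$, and increases by at least $\eps^5/2$ each time the current partition fails to be $(\eps,p)$-regular, the process terminates after at most $s \leq K^2/\eps^5$ steps. I would define $T_0$ to be the $s$-fold iterate of $t \mapsto t\cdot 2^{t}$ starting at $t_0$ (a tower-type bound), and set $\eta \leq 1/(2T_0)$ so that no part ever falls below the threshold needed to invoke $(\eta,K)$-boundedness during the entire iteration. The choice of $N_0$ is dictated by the need to carry out equipartition adjustments at each step without distortion. The main obstacle is the apparent circular dependence between $\eta$ and $T_0$, since the boundedness condition must hold for every intermediate partition while the minimum part size shrinks as refinement progresses; the resolution is the order in which the constants are chosen, namely first $s$ from $\eps$ and $K$, then $T_0$ from $s$ and $t_0$, and only finally $\eta$ from $T_0$.
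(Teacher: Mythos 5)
The paper does not prove Theorem~\ref{reg thm} itself: it is stated with a \verb|\qed| as a citation to Kohayakawa~\cite{MR1661982} and Kohayakawa--R\"odl~\cite{MR1980964}, so there is no in-paper argument to compare against. Your sketch is the standard energy-increment proof from those sources, and the overall architecture is right: the $p$-normalized mean-square index, the upper bound $K^2/2$ from $(\eta,K)$-boundedness, the defect Cauchy--Schwarz boost $\Theta(\eps^5)$ for a non-regular equipartition, the resulting bound of $O(K^2/\eps^5)$ on the number of refinement rounds, the tower-type $T_0$, and crucially the order of quantifiers $s \rightarrow T_0 \rightarrow \eta$ that breaks the apparent circularity. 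All of that is correct.

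The one place you are too cavalier is the equipartitioning step, which is precisely where the sparse argument genuinely diverges from the dense one. In the dense proof the loss incurred by pushing fragments into $V_0$ is harmless because all densities are trivially bounded by $1$; in the sparse proof the $p$-density $d_{H,p}$ is only controlled by $(\eta,K)$-boundedness on pairs of sets \emph{each of size at least $\eta|V|$}. After you take the common refinement by irregularity witnesses, the resulting atoms (and a fortiori the leftover fragments you discard into $V_0$) can have size far below $\eta|V|$ even when $\eta$ is chosen relative to $T_0$, so you cannot simply assert that the index lost in re-equipartitioning is ``negligible'' -- the densities of the discarded pairs are a priori unbounded. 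The resolution in the literature requires an extra idea, for instance comparing indices through intermediate partitions whose relevant cells are all $\geq \eta|V|$, or capping the densities used in the increment argument at a multiple of $K$ and checking that the monotonicity and boost survive the capping. Your statement that ``sub-parts that would drop below size $\eta|V|$ after refinement are absorbed into $V_0$'' also does not work as written: with $\eta = 1/(2T_0)$ essentially \emph{all} atoms of the common refinement would be absorbed, destroying the partition. You should either shrink $\eta$ to account for the intermediate refinement (which re-opens the circularity you claimed to have closed) or, more cleanly, decouple the threshold used for boundedness from the size of the refinement atoms and bound the equipartitioning loss by an argument that does not invoke boundedness on sub-$\eta|V|$ sets.
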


Considering the random graph $G\in G(n,p)$ it easily follows from Chernoff's inequality
that a.a.s.~$G$ is $(\eta,K)$-bounded with respect to $p$
for any $\eta>0$ and $K>1$ as long as~$p\gg n^{-1}$. 
In such an event, every subgraph $H\subseteq G$ is 
by definition again $(\eta,K)$-bounded with respect to $p$
and consequently it admits a regular partition.
We shall employ the following standard version of
Chernoff's inequality (see, e.g.,~\cite{janson2011random}*{Corollary~2.3}) on the deviation of the binomial random variable~$\Bin(n,p)$.

\begin{thm}[Chernoff's inequality]\label{thm:chernoff}
	For every binomial random variable $X \sim \Bin(n,p)$ 
	and every $\delta\in(0,3/2]$ we have 
	$\PP\big(X \neq (1\pm \delta){\EE}[X]\big) < 2 \exp\big(-\delta^2\EE[X]/3\big)$.
	\qed
\end{thm}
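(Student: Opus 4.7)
The plan is to apply the classical Cramér--Chernoff exponential moment method separately to the upper and lower tails, and then combine the two estimates via a union bound. Writing $\mu=\EE[X]=np$ and decomposing $X=X_1+\dots+X_n$ as a sum of independent $\mathrm{Bernoulli}(p)$ variables, the moment generating function factorises and, for every real $t$,
\[
\EE\bigl[e^{tX}\bigr] \;=\; \bigl(1-p+pe^t\bigr)^n \;\leq\; \exp\bigl(\mu(e^t-1)\bigr),
\]
using $1+x\leq e^x$ with $x=p(e^t-1)$.

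For the upper tail I would apply Markov's inequality to $e^{tX}$ with $t>0$, which gives
\[
\PP\bigl(X\geq(1+\delta)\mu\bigr) \;\leq\; \exp\bigl(\mu(e^t-1)-t(1+\delta)\mu\bigr),
\]
and optimise by choosing $t=\ln(1+\delta)$. This yields $\PP(X\geq(1+\delta)\mu)\leq\bigl(e^\delta(1+\delta)^{-(1+\delta)}\bigr)^\mu$. The same manipulation applied to $e^{-tX}$ with $t>0$ delivers the symmetric lower tail estimate $\PP(X\leq(1-\delta)\mu)\leq\bigl(e^{-\delta}(1-\delta)^{-(1-\delta)}\bigr)^\mu$.

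The only genuine obstacle, and the reason the hypothesis $\delta\in(0,3/2]$ appears, is the purely analytic verification of the two pointwise inequalities
\[
\delta - (1+\delta)\ln(1+\delta) \;\leq\; -\tfrac{\delta^2}{3} \qand -\delta-(1-\delta)\ln(1-\delta) \;\leq\; -\tfrac{\delta^2}{3}.
\]
The lower-tail bound is the easier of the two and holds on all of $(0,1)$, even with the sharper constant $1/2$ in place of $1/3$, which follows from term-by-term comparison after expanding $(1-\delta)\ln(1-\delta)$ as a power series. The upper-tail bound is tight at the endpoint: setting $h(\delta)=(1+\delta)\ln(1+\delta)-\delta-\delta^2/3$, one checks that $h(0)=h'(0)=0$ and $h''(\delta)=1/(1+\delta)-2/3$ changes sign only at $\delta=1/2$, so $h'$ is unimodal and $h$ first increases and then decreases on $[0,\infty)$; consequently the minimum of $h$ on $[0,3/2]$ is attained at an endpoint, and the direct computation $h(3/2)=\tfrac{5}{2}\ln\tfrac{5}{2}-\tfrac{9}{4}>0$ confirms the bound. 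Substituting the two inequalities into the tail estimates produced above and applying a union bound then yields $\PP\bigl(X\neq(1\pm\delta)\mu\bigr)<2\exp(-\delta^2\mu/3)$, as required.
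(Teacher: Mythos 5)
The paper gives no proof of this statement at all: it is quoted from Janson, {\L}uczak, and Ruci\'nski (Corollary~2.3 of their book) and the terminating square marks it as a cited fact. Your proposal is therefore a self-contained derivation of a result the paper treats as a black box, and the exponential-moment (Cram\'er--Chernoff) route you take is the standard one, essentially the same as in that reference.

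The argument is essentially correct, but one small point should be made explicit before it is complete. The lower-tail estimate $\PP\bigl(X\le(1-\delta)\mu\bigr)\le\bigl(e^{-\delta}(1-\delta)^{-(1-\delta)}\bigr)^{\mu}$ and the pointwise inequality $-\delta-(1-\delta)\ln(1-\delta)\le-\tfrac{\delta^2}{2}$ that you invoke are only meaningful for $\delta\in(0,1)$, since $\ln(1-\delta)$ is undefined beyond that range; you say as much, but the theorem allows $\delta$ up to $3/2$. For $\delta\in[1,3/2]$ the lower tail must be dispatched separately, though this is trivial: for $\delta>1$ we have $(1-\delta)\mu<0\le X$, so the lower-tail probability is zero, and for $\delta=1$ we have $\PP(X\le 0)=(1-p)^n\le e^{-np}=e^{-\mu}\le e^{-\delta^2\mu/3}$. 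With that addendum the proof closes: your verification of the upper-tail inequality $\delta-(1+\delta)\ln(1+\delta)\le-\tfrac{\delta^2}{3}$ on $[0,3/2]$ via the sign change of $h''$ at $\delta=1/2$ and the endpoint computation $h(3/2)=\tfrac{5}{2}\ln\tfrac{5}{2}-\tfrac{9}{4}>0$ is correct (note $h(0)=0$, so unimodality plus nonnegativity at both endpoints suffices), and the union bound then yields the claimed two-sided estimate.
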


We shall also use the fact that $(\eps,p)$-regularity 
is typically inherited in small subsets, 
which in our setting are given by the neighbourhoods of vertices.
For the classical notion of (dense) $\eps$-regular pairs 
this was essentially observed by Duke and R\"{o}dl~\cite{MR796186} 
and for sparse regular pairs it can be found in~\cites{MR2278123, MR1980964}. More precisely, we
shall employ a result from~\cite{MR2278123} 
governing the hereditary nature of 
\textit{$ (\eps, \alpha, p)$-denseness} (or one sided-regularity).

\begin{dfn}
	Let $\alpha$, $\eps >0 $ and $p\in(0,1]$ be given and 
	let $H=(V,E)$ be a graph. For disjoint, nonempty subsets
	$X$, $Y \subseteq V$, we say that the pair $(X,Y)$ 
	is $(\eps, \alpha, p)$-dense if for all subsets 
	$X'\subseteq X$ and $ Y'\subseteq Y $ with 
	$|X'| \geq \eps |X|$ and $|Y'| \geq \eps |Y|$, 
	we have 
	\[
			d_{H,p}(X',Y')\geq \alpha - \eps.
	\]
\end{dfn}

It follows immediately from its definition that 
$ (\eps, \alpha, p)$-denseness is inherited by large sets, 
i.e. that for an $ (\eps, \alpha, p)$-dense pair $ (X,Y) $ 
and arbitrary subsets $ X'\subseteq X $ and $ Y' \subseteq Y $ 
with $ |X'|\geq \mu |X|$ and $ |Y'| \geq \mu |Y| $ where $\mu\geq \eps$ 
the pair $ (X',Y') $ is $ (\eps/\mu , \alpha, p)$-dense. 
The following result from~\cite{MR2278123}*{Corollary~3.8} states that with exponentially small error probability 
this denseness property is even inherited by
randomly chosen subsets of significantly smaller size.

\begin{thm}\label{thm:Gerke}
Given $\alpha$, $\beta >0$ and $\eps'>0$, 
	there exist constants $\eps_{0}=\eps_{0}(\alpha, \beta, \eps') >0$ 
	and~$L=L(\alpha, \eps') $ such that
	for every $\eps\in (0,\eps_{0}]$ and $p\in(0,1]$, 
	every $ (\eps, \alpha, p)$-dense pair~$(X,Y)$ in a graph $H$ 
	has the following property: the number of pairs $(X',Y')$ of sets 
	with $ X' \subseteq X $ and $ Y' \subseteq Y $ with 
	$|X'|=w_1\geq L/p $ and $|Y'|=w_2\geq L/p$ 
	such that the pair~$(X',Y')$ is not $ (\eps', \alpha, p)$-dense 
	is at most $\beta^{\min \{w_1,w_2\}}\binom{|X|}{w_1} \binom{|Y|}{w_2}$.\qed
\end{thm}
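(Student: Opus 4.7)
The plan is probabilistic. Set $a = |X|$, $b = |Y|$ and sample $X' \subseteq X$, $Y' \subseteq Y$ uniformly and independently among subsets of sizes $w_1$ and $w_2$; it is enough to prove
\[
\PP\bigl[(X', Y') \text{ is not $(\eps', \alpha, p)$-dense}\bigr] \leq \beta^{\min\{w_1, w_2\}}.
\]
Failure means that $(X', Y')$ contains a \emph{sparse witness}: a subpair $(X^*, Y^*)$ with $|X^*| = m_1 \in [\eps' w_1, w_1]$, $|Y^*| = m_2 \in [\eps' w_2, w_2]$, and $e_H(X^*, Y^*) < (\alpha - \eps')\, p\, m_1\, m_2$. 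A union bound over the at most $w_1 w_2$ possible sizes $(m_1, m_2)$ and the first moment method (using the identity $\binom{a-m_1}{w_1-m_1}/\binom{a}{w_1} = \binom{w_1}{m_1}/\binom{a}{m_1}$) reduce the task to bounding
\[
\sum_{m_1, m_2} q(m_1, m_2)\, \binom{w_1}{m_1}\binom{w_2}{m_2}\,, \qquad q(m_1, m_2) := \frac{N_{\mathrm{sp}}(m_1, m_2)}{\binom{a}{m_1}\binom{b}{m_2}}\,,
\]
where $N_{\mathrm{sp}}(m_1, m_2)$ counts the sparse witnesses of sizes $(m_1, m_2)$ in $(X, Y)$, so $q(m_1, m_2)$ is exactly the probability that a uniformly random size-$(m_1, m_2)$ subpair is sparse.

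The technical heart is a multiplicative concentration estimate of the form
\[
q(m_1, m_2) \leq \exp\bigl(-c(\alpha, \eps')\, p\, m_1\, m_2\bigr),
\]
valid once $\eps \leq \eps_0(\alpha, \eps')$ is small enough. I would establish this by a two-stage exposure of $(X^*, Y^*)$. First reveal $X^*$ vertex by vertex: using the hereditary $(\eps, \alpha, p)$-denseness of $(X, Y)$ (applied to $(X, Y)$ itself, yielding $e_H(X, Y) \geq (\alpha - \eps) p a b$, and to subpairs for degree-regularity) together with a Doob-martingale concentration, all but an exponentially small fraction of $X^*$ satisfies $e_H(X^*, Y) \geq (\alpha - \eps'/2)\, p\, m_1\, b$ and has a well-spread degree sequence $\{|N_H(y) \cap X^*|\}_{y \in Y}$ concentrated near its mean $\alpha p m_1$. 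Then for such a typical $X^*$, condition on $X^*$ and apply multiplicative Chernoff for hypergeometric sampling to $e_H(X^*, Y^*) = \sum_{y \in Y^*}|N_H(y) \cap X^*|$, whose summands lie in $[0, m_1]$ (mostly near $\alpha p m_1$, by the first-stage well-spread property) and whose expectation is $\geq (\alpha - \eps'/2)\, p\, m_1\, m_2$; the Bernstein/Chernoff exponent is then of order $p\, m_1\, m_2$. The positive correlations among edge-indicators of $e_H(X^*, Y^*)$ (two edges sharing a common endpoint are positively correlated under hypergeometric sampling) preclude a one-shot Chernoff bound; the two-stage decoupling is what makes the argument work.

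To finish, observe the arithmetic identity $p m_1 m_2 \geq \eps'^2\, p\, w_1 w_2 \geq \tfrac{1}{2}\eps'^2\, L\, (w_1 + w_2)$, using $m_i \geq \eps' w_i$, $w_i \geq L/p$, and the harmonic-mean bound $2 w_1 w_2/(w_1 + w_2) \geq \min\{w_1, w_2\}$. Choosing $L = L(\alpha, \eps')$ so that $\tfrac{1}{2}c\, \eps'^2\, L \geq \log(8/\beta)$ yields $q(m_1, m_2) \leq (\beta/8)^{w_1 + w_2}$; combined with $\binom{w_1}{m_1}\binom{w_2}{m_2} \leq 2^{w_1 + w_2}$ and the $O(w_1 w_2)$ summation factor (easily absorbed since $w_1 + w_2 \geq 2L/p$ is large), the total bound becomes $\leq \beta^{w_1 + w_2} \leq \beta^{\min\{w_1, w_2\}}$. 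The main obstacle is securing the concentration rate $\exp(-\Theta(p\, m_1\, m_2))$: additive Azuma/McDiarmid bounds yield only $\exp(-\Theta(p^2 m_1 m_2/(m_1 + m_2)))$, which degrades for small $p$; the two-stage exposure, leveraging the hereditary strength of $(\eps, \alpha, p)$-denseness to guarantee that the first-stage output is sufficiently regular for a Bernstein-type bound in the second stage, is what delivers the required exponent.
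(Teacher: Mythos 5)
This theorem is cited from Gerke, Kohayakawa, R\"odl, and Steger and carries no proof in the paper, so there is nothing internal to compare against; I assess your proposal on its own terms. The reduction to a first-moment estimate via the hypergeometric identity is sound, and the idea of a two-stage exposure to decouple the correlations in $e_H(X^*,Y^*)$ is a reasonable one. But the final bookkeeping contains a concrete error: you choose $L$ so that $\tfrac{1}{2}c(\eps')^2 L \geq \log(8/\beta)$, which forces $L$ to depend on $\beta$, while the statement explicitly requires $L = L(\alpha, \eps')$ with $\beta$ entering only through $\eps_0$. (Correspondingly, you write $\eps_0(\alpha,\eps')$ for your concentration threshold, dropping the $\beta$-dependence the statement builds in.)

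This mislocation of $\beta$ reflects a deeper gap. A correct bound on $q(m_1,m_2)$ must carry an explicit $\eps$-power, roughly of the form $(C\eps)^{c m_1}$, because $(\eps,\alpha,p)$-denseness permits $X$ to contain an exceptional set of size on the order of $\eps^2|X|/\eps'$ whose vertices have degree only $(\alpha-2\eps')p|Y|$ into $Y$; a uniformly random $X'$ of size $w_1$ below that threshold lies in this set with probability roughly $(\eps^2/\eps')^{w_1}$, after which $(X',Y')$ is itself a sparse witness with constant probability. Your claimed estimate $q\leq\exp\bigl(-c(\alpha,\eps')\,p\,m_1m_2\bigr)$, with $c$ independent of $\eps$, is incompatible with this unless $\eps$ is already exponentially small relative to $L$ --- a compatibility you never verify; indeed the first-stage martingale bound you sketch can only control the probability of $X^*$ landing in the exceptional set up to $\eps^{\Theta(m_1)}$, with no $p$ and no $m_2$ appearing in the exponent. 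The way to close the argument is to keep this $\eps$-power visible through the summation, let $L=L(\alpha,\eps')$ handle the $\exp(-\Theta(p\,m_1 m_2))$ part against the binomial factors $\binom{w_1}{m_1}\binom{w_2}{m_2}\leq 2^{w_1+w_2}$, and only at the very end impose $\eps\leq\eps_0(\alpha,\beta,\eps')$ so that the $\eps$-power beats the remaining $\beta^{\min\{w_1,w_2\}}$ target.
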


Moreover, we will use the fact that enlarging the sets of some
dense pair by a few vertices may result in a pair that again is dense,
but maybe with slightly weaker parameters (see, e.g.,~\cite{ABHKP}*{Lemma~2.10}).

\begin{lemma}\label{supset}
Let $\alpha>0$, $p\in (0,1)$ and $\eps\in (0,\frac{1}{10})$. 
	Let $H=(V,E)$ be a graph and let $U'$, $W'\subseteq V$ be disjoint nonempty sets 
	such that $(U',W')$ is $(\eps,\alpha,p)$-dense in $H$. 
	If $U\supseteq U'$ and $W\supseteq W'$ are disjoint with 
	$|U|\leq \big(1+\frac{\eps^3}{10}\big)|U'|$ and 
	$|W|\leq \big(1+\frac{\eps^3}{10}\big)|W'|$,
	then $(U,W)$ is $(2\eps,\alpha,p)$-dense in $H$.\qed
\end{lemma}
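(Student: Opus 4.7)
The plan is to reduce $(2\eps,\alpha,p)$-denseness of the enlarged pair $(U,W)$ to the hypothesized $(\eps,\alpha,p)$-denseness of $(U',W')$ by trimming any test sets back to the original sides and checking that the trimmed sets remain large enough to trigger the denseness hypothesis.

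Concretely, fix arbitrary $X\subseteq U$ and $Y\subseteq W$ with $|X|\ge 2\eps|U|$ and $|Y|\ge 2\eps|W|$, and set $X':=X\cap U'$ and $Y':=Y\cap W'$. From $|U\sm U'|=|U|-|U'|\le\tfrac{\eps^3}{10}|U'|$ together with $|X|\ge 2\eps|U|\ge 2\eps|U'|$ I would deduce
\[
|X'|\ge |X|-|U\sm U'|\ge |U'|\Big(2\eps-\tfrac{\eps^3}{10}\Big)\ge\eps|U'|,
\]
where the last step uses $\eps<1/10$. Symmetrically $|Y'|\ge\eps|W'|$, so applying the $(\eps,\alpha,p)$-denseness of $(U',W')$ to the pair $(X',Y')$ yields $d_{H,p}(X',Y')\ge\alpha-\eps$.

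Next I would transport this bound to $d_{H,p}(X,Y)$, using that $e_H(X,Y)\ge e_H(X',Y')$ by monotonicity. The same size control gives
\[
\frac{|X'|}{|X|}\ge 1-\frac{\eps^3|U|/10}{2\eps|U|}=1-\frac{\eps^2}{20},
\]
and analogously for $Y$. Combining,
\[
d_{H,p}(X,Y)\ge d_{H,p}(X',Y')\cdot\frac{|X'||Y'|}{|X||Y|}\ge(\alpha-\eps)\Big(1-\tfrac{\eps^2}{20}\Big)^{\!2}\ge(\alpha-\eps)\Big(1-\tfrac{\eps^2}{10}\Big).
\]

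What remains is the elementary inequality $(\alpha-\eps)(1-\eps^2/10)\ge\alpha-2\eps$, which on expanding reduces to $\alpha\eps^2/10\le\eps+\eps^3/10$; this holds comfortably whenever $\alpha$ is bounded by a constant, which is the intended regime (there $\alpha$ plays the role of a relative density and is at most $1$). I do not expect any real obstacle: the entire argument is a routine size-accounting exercise, and the factor-of-two slack built into the new regularity parameter is precisely what absorbs the $O(\eps^2)$ relative error introduced by enlarging each of the two sides by a $(1+\eps^3/10)$-factor.
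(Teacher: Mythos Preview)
Your argument is the standard direct verification and is correct; the paper itself does not prove this lemma but simply quotes it from \cite{ABHKP}*{Lemma~2.10}, so there is no approach to compare.

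One remark on your final caveat: the restriction to bounded $\alpha$ is in fact necessary and not merely cosmetic. As literally stated (with no upper bound on $\alpha$) the lemma is false: take $(U',W')$ complete bipartite with $p$-density exactly $\alpha$, enlarge each side by $\tfrac{\eps^3}{10}|U'|$ isolated vertices, and test on sets $X,Y$ of size $2\eps|U|$ containing all the new vertices; then $d_{H,p}(X,Y)=\alpha\cdot\tfrac{|X'||Y'|}{|X||Y|}\approx\alpha(1-\eps^2/10)$, which drops below $\alpha-2\eps$ once $\alpha>20/\eps$. In every application in this paper $\alpha$ is a fixed small constant (at most $\alpha'/20<1$), so your argument covers the intended regime and this technicality is harmless.
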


\section{Properties of random graphs}
\label{sec:gnp}
For the proof of Theorem~\ref{thm:main} we observe a few properties that asymptotically almost surely are satisfied
by the random graph. 
\begin{lemma}\label{lem:gnp}
	For every $\delta>0$ there is some $C>1$ such that for 
	$p=p(n)>C(\log(n)/n)^{1/2}$ a.a.s.\ 
	$G=(V,E)\in G(n,p)$ satisfies the following properties:
	\begin{enumerate}[label=\rmlabel]
	\item\label{it:gnp:1} 
		Every vertex $v\in V$ has degree $d_G(v)=(1\pm \delta)pn$ 
		and the joint neighbourhood of every pair of distinct vertices $u$, $w\in V$ 
        satisfies $|N_G(u)\cap N_G(w)|=(1\pm\delta)p^2n$.
    \item\label{it:gnp:1a}
    	For every vertex $v\in V$ and every subset 
    	$X\subseteq N_G(v)$ with $|X|\geq \delta pn$,
		there are at most $\frac{7}{\delta^3p}$ vertices $y\in V$ 
		such that $|N_G(y)\cap X|>(1+\delta)p|X|$. 
	\item\label{it:gnp:1b}
    	For every subset $U\subseteq V$ with $|U|\geq \delta n$,
		there are at most $\frac{7}{\delta^3p}$ vertices $y\in V$ 
		such that $|N_G(y)\cap U|>(1+\delta)p|U|$. 
	\item\label{it:gnp:2} 
		For every pair of distinct vertices $u,w\in V$ 
		and all subsets $U\subseteq N_G(u)$, $W\subseteq N_G(w)$ with 
		$|U|\geq \delta pn$ and $|W|\geq 3\delta^{-3}pn/\log n$, 
		the number~$e_G(U,W)$ of edges in the induced bipartite graph 
		$G[U,W]$ satisfies $e_G(U,W)= (1\pm\delta)p|U||W|$.
	\item\label{it:gnp:3} 
		For every pair of disjoint sets of vertices $A$, $B\subseteq V$ with
		$|A|$, $|B|\geq \delta n$ we have $e_G(A)=(1\pm \delta)p\binom{|A|}{2}$ and $e_G(A,B)=(1\pm \delta)p|A||B|$.
	\item\label{it:gnp:4}
		For all distinct vertices $u$, $v\in V$ and all disjoint subsets 
		$X\subseteq N_G(u)$, $Y\subseteq N_G(v)$ and disjoint subsets
		$A$, $B\subseteq V$ satisfying $|X|$, $|Y|\geq \delta pn$ and 
		$|A|$, $|B|\geq \delta n$,
		the number of $4$-cycles $xyabx$ with 
		$x\in X$, $y\in Y$, $a\in A$ and $b\in B$
		is bounded from above by $2p^4|X||Y||A||B|$.
	\end{enumerate}
\end{lemma}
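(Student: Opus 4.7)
The plan is to verify the six items one by one, in each case by applying Chernoff's inequality (Theorem~\ref{thm:chernoff}) to a suitable binomial variable and then taking a union bound over the relevant configurations. The quantitative key is that $p\ge C(\log n/n)^{1/2}$ gives $p^2 n\ge C^2\log n$, a budget which for $C$ sufficiently large in terms of $\delta$ absorbs every entropy factor of order $O(n)$, $O(pn)$ or $O(\log n)$ that will arise from the union bounds.

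For items~\ref{it:gnp:1}, \ref{it:gnp:1b} and~\ref{it:gnp:3}, Chernoff applied to the binomials $d_G(v)\sim\Bin(n-1,p)$, $|N_G(u)\cap N_G(w)|\sim\Bin(n-2,p^2)$, $|N_G(y)\cap U|\sim\Bin(|U|,p)$, $e_G(A)\sim\Bin(\binom{|A|}{2},p)$ and $e_G(A,B)\sim\Bin(|A||B|,p)$ yields deviation probabilities of order $\exp(-\Omega(\delta^3 p^2 n))$ (or better), which easily survive the union bound over at most $n^2$ pairs of vertices or at most $4^n$ pairs of subsets.

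Items~\ref{it:gnp:1a} and~\ref{it:gnp:2} require a little more care since the sets $X$, $U$, $W$ are constrained to lie in prescribed vertex neighbourhoods. After conditioning on the outcome of item~\ref{it:gnp:1} so that $|N_G(v)|\le 2pn$ for every $v$, the number of candidate sets is at most $2^{2pn}$. For item~\ref{it:gnp:1a}, the key observation is that for distinct $y_1,\dots,y_k$ the variables $|N_G(y_i)\cap X|$ depend on pairwise disjoint sets of edges, hence are independent, so a product Chernoff gives
\[
\PP\bigl[|N_G(y_i)\cap X|>(1+\delta)p|X|\ \text{for all } i\bigr]\;\le\;\exp\bigl(-\delta^2\,p|X|\,k/3\bigr),
\]
and for $k=\lceil 7/(\delta^3 p)\rceil$ the resulting exponent $\ge 7pn/3$ beats the entropy $O(pn)+k\log n$ obtained from summing over $v$, $X$ and the $k$-subset. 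Item~\ref{it:gnp:2} is analogous: Chernoff applied to $e_G(U,W)\sim\Bin(|U||W|,p)$ has exponent at least $p^3 n^2/\log n$, comfortably beating the $O(pn)$ entropy once $C$ is large enough.

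The main obstacle is item~\ref{it:gnp:4}, the upper tail on the count $Z$ of $4$-cycles on the specified classes. Since $Z$ is a degree-$4$ polynomial in the independent edge indicators with mean $(1+o(1))p^4|X||Y||A||B|$, naive variance arguments give only polynomial concentration, insufficient for a union bound over admissible $(X,Y,A,B)$. My plan is to invoke Kim--Vu polynomial concentration (alternatively, the deletion method of Janson--Rucinski--Spencer) after estimating the partial-derivative averages $\EE_j(Z)$ for $j=1,\dots,4$ directly: each $\EE_j$ factors as $p^{4-j}$ times a product of at most $4-j$ of the sizes $|X|,|Y|,|A|,|B|$, possibly together with an extra $p^2 n$ factor coming from joint neighbourhoods via item~\ref{it:gnp:1}. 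Establishing that the resulting upper tail $\PP[Z>2\EE Z]$ is strong enough to absorb the union bound over $X\subseteq N_G(u)$, $Y\subseteq N_G(v)$ and $A,B\subseteq V$ is the most delicate part of the argument; this is the step that forces both the lower threshold $\delta pn$ on $|X|,|Y|$ and, via item~\ref{it:gnp:2}, the sharper threshold $3\delta^{-3}pn/\log n$ for $|W|$.
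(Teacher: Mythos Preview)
Your treatment of items~\ref{it:gnp:1}--\ref{it:gnp:3} is correct and matches the paper's argument essentially line for line: Chernoff plus a union bound, with the factor $p^{|X|}$ (respectively $p^{|U|+|W|}$) absorbing the entropy of choosing $X\subseteq N_G(v)$ (respectively $U\subseteq N_G(u)$, $W\subseteq N_G(w)$).

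The gap is in item~\ref{it:gnp:4}. Your plan is to treat the $4$-cycle count $Z$ as a degree-$4$ polynomial and invoke Kim--Vu (or a deletion argument) to control the upper tail, then union over all admissible $(u,v,X,Y,A,B)$. This does not work quantitatively. At the threshold $p\asymp(\log n/n)^{1/2}$ and with the minimal sizes $|X|,|Y|=\Theta(pn)$, $|A|,|B|=\Theta(n)$, one has $\EE Z=\Theta(p^6n^4)=\Theta(n)$, while the maximal first partial expectation is $\EE_1=\Theta(p^3|A||B|)=\Theta(n^{1/2})$. The Kim--Vu inequality then yields only
\[
\PP\big[Z>2\,\EE Z\big]\le \exp\big(-\Theta\big((\EE Z/\EE_1)^{1/8}\big)\big)=\exp\big(-n^{1/16+o(1)}\big),
\]
which is far too weak to survive the union bound of order $4^n$ over the choices of $A,B\subseteq V$. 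More refined upper-tail technology does not obviously rescue this either, and in any case you have not supplied the verification you yourself flag as ``the most delicate part''.

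The paper avoids this difficulty entirely: item~\ref{it:gnp:4} is proved \emph{deterministically} from items~\ref{it:gnp:1}--\ref{it:gnp:2}, with no further appeal to randomness. One splits off the small exceptional sets $X'\subseteq X$ and $Y'\subseteq Y$ of vertices with too many neighbours in $Y$, $B$, or $A$ (these sets have size $O(1/p)$ by~\ref{it:gnp:1a} and~\ref{it:gnp:1b}). For $x\in X\setminus X'$ and $y\in Y\setminus Y'$ one then counts $4$-cycles through $xy$ by bounding $e_G(N_G(y)\cap A,\,N_G(x)\cap B)$ via item~\ref{it:gnp:2}; this is precisely why the asymmetric threshold $|W|\ge 3\delta^{-3}pn/\log n$ appears there. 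The contribution of cycles through $X'\cup Y'$ is handled using the codegree bound from~\ref{it:gnp:1} and is negligible since $|X'|,|Y'|=O(1/p)$ and $p^2n\to\infty$. This route is both simpler and immune to the union-bound obstruction.
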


\begin{proof}
	Without loss of generality we may assume that $(1+\delta)^4+\delta<2$.
	We set $C=7/\delta^3$ and let $p=p(n)>C(\log(n)/n)^{1/2}$.
	
	By a standard application of Chernoff's inequality  (Theorem~\ref{thm:chernoff}) combined with the union bound 
	it follows that $G\in G(n,p)$ satisfies \ref{it:gnp:1} 
	with probability $1-o(1)$.

	For the proof of part~\ref{it:gnp:1a},
	consider subsets $X$, $Y\subseteq V$ with 
	$|X|=m\geq \delta pn$ and $|Y|=\lceil \frac{7}{\delta^3p}\rceil$. 
	It follows from Chernoff's inequality (Theorem~\ref{thm:chernoff}) 	
	that
	\begin{align*}
		\PP\big(
				|N_G(y)\cap X|>(1+\delta)p|X|\ \text{for every $y\in Y$}\big)
		& \leq
			\PP\left(e_G(X,Y)\neq (1\pm \delta)\EE[e_G(X,Y)]\right)\\
		& < 
			2\exp\big(-\delta^2p|X||Y|/3\big)\\
		& \leq
			2\exp\big(-\tfrac{7\left|X\right|}{3\delta}\big)\,.
	\end{align*}
	Considering all choices of $v$, $Y$, and $X$ and imposing that 
	$X\subseteq N_G(v)$ we arrive at
	\begin{align*}
		\PP(\text{property~\ref{it:gnp:1a} fails})
		& \leq 
			n \cdot n^{\lceil \frac{7}{\delta^3 p} \rceil}
			\cdot \sum_{m\geq \delta p n}
			\binom{n}{m}p^m
			\cdot 2\exp\big(-\tfrac{7m}{3\delta}	
				\big)\\
		& \leq
			2n^{2+\frac{7}{\delta^3 p}}
			\cdot \sum\limits_{m}\left(\frac{epn}{m}\right)^m 
			e^{-\frac{7m}{3\delta}}\\
		& \leq
			2n^{2+\frac{7}{\delta^3 p}}
			\cdot \sum\limits_{m}e^{pn} 
			e^{-\frac{7}{3\delta}\delta pn}
	\end{align*}			
	where we used the fact that $x\mapsto (a/x)^x$ attains its maximum at $x=a/e$.		
	Consequently, in view of the choice of $C$ and $p$ we obtain
	\[
			\PP(\text{property~\ref{it:gnp:1a} fails})
			\leq
			2n^{2+\frac{7}{\delta^3 p}}e^{-\frac{4}{3}pn}
			=o(1)\,.
	\]
	This concludes the proof of part~\ref{it:gnp:1a}.
	
	Part~\ref{it:gnp:1b} follows by a similar argument and we omit the details here.
	
For the proof of part~\ref{it:gnp:2} we
	consider subsets $U$, $W\subseteq V$ satisfying
	$|U|\geq \delta pn$ and $|W|\geq 3\delta^{-3}pn/\log n$
	and vertices $u$, $w\in V$. 
	Applying Chernoff's inequality (Theorem~\ref{thm:chernoff}) 	
	we conclude that
	\[
		\PP\big(e_G(U,W)\!\neq\!(1\pm \delta)p|U||W| \tand U\subseteq N_G(u), W\subseteq N_G(w)\big)
		\leq
			2p^{|U|+|W|}
			\exp\big(-\tfrac{\delta^2}{3}p|U||W| \big) .
	\]
	Considering all choices of $u$, $w$, $U$, and $W$,
	the union bound yields
	\begin{align*}
		\PP(\text{property~\ref{it:gnp:2} fails})
		& \leq 
			n^2 \sum_{m_U\geq \delta p n} \binom{n}{m_U}
				\sum_{m_W\geq \frac{3\delta^{-3} p n}{\log n}}
			\binom{n}{m_W}2p^{m_U+m_W}
			\exp\big(-\tfrac{\delta^2}{3}pm_Um_W \big)\\
		& \leq 
			2n^2 \sum_{m_U} \sum_{m_W}
			\left(\frac{epn}{m_U}\right)^{m_U}
			\left(\frac{epn}{m_W}\right)^{m_W} 
			\exp\big(-\tfrac{\delta^2}{3}pm_Um_W \big)\,.
	\end{align*}
	Appealing again to the fact that $x\mapsto (a/x)^x$ attains its maximum at $x=a/e$
	and the choice of $C$ and $p$ then gives
	\begin{align*}
	\PP(\text{property~\ref{it:gnp:2} fails})
		& \leq
			2n^2 \sum_{m_U} \sum_{m_W}
			e^{2pn}
			e^{-\frac{\delta^2}{3}p\cdot 
				\delta pn \cdot 3\delta^{-3} p n/\log n}  \\
		& =
			2n^4 \left(e^{2-p^2n/\log n}\right)^{pn}\\
		& \leq
			2n^4 \left(e^{2-C^2}\right)^{pn}\\
		&=o(1)\,,
	\end{align*}
	which concludes the proof of part~\ref{it:gnp:2}.

Part \ref{it:gnp:3} again is proven by a standard application of Chernoff's inequality and we omit the proof.

Part~\ref{it:gnp:4} is, in fact, a deterministic consequence of 
properties~\ref{it:gnp:1}--\ref{it:gnp:2}, i.e., 
we will show that every $n$-vertex graph~$G=(V,E)$ 
satisfying~\ref{it:gnp:1}--\ref{it:gnp:2}  
enjoys property~\ref{it:gnp:4}, provided~$n$ is sufficiently large.
	
Let $u$, $v$, $X$, $Y$, $A$ and $B$ be given such that
$A$ and $B$ are disjoint sets with $|A|$, $|B|\geq \delta n$, 
and such that $X\subseteq N_G(u)$, $Y\subseteq N_G(v)$ 
are sets satisfying $|X|$, $|Y|\geq \delta pn$.
We consider the set 
$X'\subseteq X$ of exceptional vertices $x'\in X$ for which  
	\[
		\big|N_G(x')\cap Y\big|>(1+\delta)p |Y|
		\quad\text{or}\quad 
		\big|N_G(x')\cap B\big|>(1+\delta)p |B|\,.
	\]
Similarly, let $Y'\subseteq Y$ be those vertices $y'\in Y$ with too many neighbours in $A$, that is, 
	\[
		\big|N_G(y')\cap A\big|>(1+\delta)p |A|\,.
	\]
It follows from~\ref{it:gnp:2} that the number of $4$-cycles $xyabx$ with $x\in X\setminus X'$, $y\in Y\setminus Y'$, $a\in A$ and $b\in B$ is bounded from above by 
	\[
		|X\setminus X'|\cdot (1+\delta)p|Y|
			\cdot (1+\delta)p\cdot (1+\delta)p|A|\cdot (1+\delta)p|B|
		\leq
		(1+\delta)^4 p^4 |X||Y||A||B|\,.
	\] 
Indeed, fixing an edge $xy\in E_G(X\setminus X',Y\setminus Y')$,
the number of which is bounded from above by 
$|X\setminus X'|\cdot (1+\delta)p|Y|$,
we find at most 
$
(1+\delta)p\cdot |N_G(x)\cap B|\cdot |N_G(y)\cap A|
$
such cycles containing $xy$.

Consequently, it suffices to bound the number of $4$-cycles 
passing through $X'$ or $Y'$ by~$\delta p^4 |X||Y||A||B|$ 
to complete the proof.
To this purpose we note that properties~\ref{it:gnp:1a} 
and~\ref{it:gnp:1b} ensure
	\begin{equation}\label{eq:gnp:C4:1}
		|X'|\leq \frac{14}{\delta^3p}
		\qqand
		|Y'|\leq \frac{7}{\delta^3p}\,.
	\end{equation}
Moreover,~\ref{it:gnp:1} implies that the number of $4$-cycles 
passing through $X'$, $Y$, $A$ and $B$ is at most 
	\[
		|X'|\cdot (1+\delta)p^2n\cdot (1+\delta)pn\cdot (1+\delta)p^2n
	\]
as any vertex $x'\in X'$ has at most $|N_G(x')\cap N_G(v)|\leq (1+\delta)p^2n$ 
neighbours $y\in Y\subset N_G(v)$ and 
at most $(1+\delta)pn$ neighbours~$b$ in~$B$, 
and $y$ and $b$ have at most $(1+\delta)p^2n$ joint neighbours in~$A$.
Similarly, there are at most $(1+\delta)^3p^5n^3|Y'|$ such $4$-cycles passing through $Y'$, and hence it follows from~\eqref{eq:gnp:C4:1} that there are at most 
	\[
		\frac{21}{\delta^3p}(1+\delta)^3p^5n^3
		=
		\frac{21}{\delta^3}(1+\delta)^3p^4n^3
		\leq
		\frac{21}{\delta^3}(1+\delta)^3
			\frac{1}{\delta^4p^2n}p^4|X||Y||A||B|
	\]
$4$-cycles passing through $X'$ or $Y'$, 
where we used $|A|$, $|B|\geq \delta n$ and $|X|$, $|Y|\geq \delta pn$ 
for the last inequality. 
Noting  that $p^2n\to\infty$ as $n\to\infty$ shows that 
there are indeed at most~$\delta p^4 |X||Y||A||B|$ such $4$-cycles 
passing through $X'$ or $Y'$, which concludes the proof of 
part~\ref{it:gnp:4}.
\end{proof}

 The following lemma asserts that a.a.s.\ in $ G(n,p) $, given a sufficiently large bipartite $ (\eps, \alpha, p) $-dense
subgraph $H$ with vertex set $ (A, B) $, say, 
$ (\eps', \alpha, p) $-denseness is inherited by 
most of the pairs $ (N_H(x, B), N_H(y, A)) $ with $ xy \in E_H(A,B) $,
where $\eps$ depends on $\eps'$.

\begin{lemma}\label{lem:matching}
	For every $\gamma$, $\alpha$, $\eps'>0$ there exists $\eps>0 $
	with the property that for every $\eta >0$ 
	there exists $C\geq 1$ 
	such that for $p\geq C(\log(n)/n)^{1/2}$ a.a.s.\
	$G=(V,E)\in G(n, p)$ satisfies the following.
	
	Suppose $ H\subseteq G $ is a bipartite subgraph of $ G $ 
	with vertex set $ V(H)=A\dcup B $ such that 
	\begin{enumerate}[label=\rmlabel]
    	 \item 
     		$\eta n \leq\left|A\right| 
     			\leq \left|B\right|\leq 2|A|$, and 
     		$(A,B)_H$ is $(\eps, \alpha, p)$-dense,
     	\item 
     		for every $ x\in A $ and $ y\in B $ we have 
   	  		\[
     			\left|N_H(y,A)\right|\geq\alpha p\left|A\right|
     			\quad \text{and} \quad
				\left|N_H(x,B)\right|\geq\alpha p\left|B\right|.
      		\]
	\end{enumerate}
	If $ M $ is a matching in $ H $ 
	such that for every edge $xy\in M$ the pair	
		\[
			\left(N_H(y, A),N_H(x,B) \right)_H
		\]
	is not $ (\eps', \alpha, p) $-dense, then $ |M|<\gamma |B|. $
\end{lemma}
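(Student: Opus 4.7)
The plan is to argue by contradiction: suppose $|M| \ge \gamma|B|$ and derive a conflict with Theorem~\ref{thm:Gerke}. The strategy is to extract, for each matching edge, a distinguishable witness sub-pair of $(A,B)$ that fails to be dense in $H$, and to bound the number of such sub-pairs via Theorem~\ref{thm:Gerke}.

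To set up constants, given $\gamma, \alpha, \eps' > 0$ I first choose $\beta = \beta(\gamma, \alpha, \eps') > 0$ sufficiently small, apply Theorem~\ref{thm:Gerke} with inputs $\alpha$, $\beta$, and $\eps'' := \eps'/2$ to produce $\eps_0$ and $L$, and set $\eps = \eps_0$; note that $\eps$ depends only on $\gamma, \alpha, \eps'$ as required. Given $\eta > 0$, I choose $C = C(\eta, \alpha, \eps', \beta, L)$ large enough that for $p \ge C(\log n/n)^{1/2}$ the random graph $G \in G(n,p)$ a.a.s.\ satisfies Lemma~\ref{lem:gnp} with a sufficiently small $\delta$, and such that both $\eps'\alpha p|A| \ge L/p$ and $\eps'\alpha\eta pn > (1+\delta)p^2 n$ hold once $n$ is large.

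For each $xy \in M$, the hypothesis that $(N_H(y,A), N_H(x,B))$ is not $(\eps', \alpha, p)$-dense provides sets $U_{xy} \subseteq N_H(y,A)$ and $V_{xy} \subseteq N_H(x, B)$ with $|U_{xy}| \ge \eps'\alpha p|A|$, $|V_{xy}| \ge \eps'\alpha p|B|$, and $d_{H,p}(U_{xy},V_{xy}) < \alpha - \eps'$. Since a random sub-pair of $(U_{xy}, V_{xy})$ has the same expected density, by averaging I may assume $|U_{xy}|=w_1$ and $|V_{xy}|=w_2$ for any prescribed sizes within the allowed range. Distinct matching edges then produce distinct witness pairs: if $(U_{xy},V_{xy}) = (U_{x'y'},V_{x'y'})$ with $y\neq y'$, then $U_{xy} \subseteq N_G(y)\cap N_G(y')$, so $|U_{xy}| \le (1+\delta)p^2 n$ by Lemma~\ref{lem:gnp}\ref{it:gnp:1}, contradicting $|U_{xy}| \ge \eps'\alpha\eta pn > (1+\delta)p^2n$; the matching property forces $x=x'$.

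Each witness is itself a sub-pair of $(A,B)$ with overall density $<\alpha - \eps'<\alpha - \eps''$, so in particular is not $(\eps'', \alpha, p)$-dense in $H$. Applying Theorem~\ref{thm:Gerke} to the $(\eps,\alpha,p)$-dense pair $(A,B)_H$ bounds the number of not $(\eps'',\alpha,p)$-dense sub-pairs of size $(w_1,w_2)$ by $\beta^{\min(w_1,w_2)}\binom{|A|}{w_1}\binom{|B|}{w_2}$, and combining with distinctness yields
\[
 |M| \;\le\; \beta^{\min(w_1,w_2)}\,\binom{|A|}{w_1}\binom{|B|}{w_2}\,.
\]
The main obstacle is pushing this last inequality to $|M| < \gamma|B|$: the Gerke bound is exponentially small as a \emph{fraction} of all sub-pairs of the given size but is potentially large in absolute terms. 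Carefully choosing $(w_1, w_2)$ at the lower end of the allowed range, together with $\beta$ small enough in $\gamma, \alpha, \eps', \eta$ (so that the factor $\beta^{\min(w_1,w_2)}$ beats the binomial growth $\binom{|A|}{w_1}\binom{|B|}{w_2}$, via Stirling estimates and the lower bound $w_i \ge L/p$ enforced by $C$), is the technical crux that closes the argument.
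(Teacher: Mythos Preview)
Your approach has a genuine gap at the final step, and it cannot be repaired within the deterministic framework you set up. The inequality you arrive at,
\[
|M| \;\le\; \beta^{\min(w_1,w_2)}\binom{|A|}{w_1}\binom{|B|}{w_2}\,,
\]
is essentially useless: with $w_1\approx \eps'\alpha p|A|$ one has $\binom{|A|}{w_1}\ge (|A|/w_1)^{w_1}=(\eps'\alpha p)^{-w_1}$, so the right-hand side is at least $\bigl(\beta/(\eps'\alpha p)\bigr)^{w_1}$. Since $p\to 0$ and $\beta$ is a fixed constant (it must be chosen before $\eta$, and certainly cannot depend on $p$), this quantity tends to infinity, not to something below $\gamma|B|$. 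No choice of $(w_1,w_2)$ in the allowed range $[L/p,\,\Theta(pn)]$ avoids the $1/p$ blow-up inside the binomial coefficients. The distinctness of the witness pairs is correct but buys nothing here, because the total number of non-dense sub-pairs of $(A,B)$ vastly exceeds $|B|$.

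The paper's proof is not a deterministic counting argument on a typical $G$; it is a direct first-moment computation in $G(n,p)$. The crucial extra ingredient is that each witness $A_y$ is required to lie in $N_G(y)$ and each $B_x$ in $N_G(x)$, which contributes a factor $p^{|A_y|+|B_x|}$ to the probability and exactly cancels the $(1/p)^{|A_y|+|B_x|}$ coming from the binomials (turning $\binom{|A|}{w_1}$ into roughly $(2e/\alpha)^{w_1}$, which $\beta^{w_1}$ can beat). Moreover, the paper exploits the whole matching at once, raising the per-edge bound to the power $|M'|\ge\tfrac{\delta\gamma}{4}|B|$, which is what drives the probability to $o(1)$; a single-edge distinctness count, as in your argument, could never produce the required super-polynomial decay.
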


\begin{proof}
Let $\gamma, \alpha$ and $ \eps' $ be given. Set
	\[
		\delta = \frac{(\eps')^3}{100}\,, \qquad
		\eps'' = \frac{\eps'}{4}\, \qqand
		\beta =\left(\frac{1}{3}\right)^{\frac{8}{\alpha \delta\gamma}} 
			\left(\frac{\alpha}{2e}\right)^3
	\]
and let $ \eps_0 $ and $ L $ be given by Theorem~\ref{thm:Gerke} 
applied with $ \alpha, \beta $ and $ \eps'' $. We define 
	\[
		\eps = \frac{1}{2}\min\{\eps_0, \alpha\}
	\]
and for given $ \eta >0 $ we set 
	\[
		C=\left(\frac{4}{\eta} \right)^{\frac{1}{2}}.
	\]
Finally, we let $ n $ be sufficiently large.

Suppose that $ H\subseteq G $ is a bipartite subgraph satisfying
conditions $ (i) $ and $ (ii) $ where $V(H)= A \dcup B$. 
Assume that there exists a matching $ M $ 
of size at least $ \gamma |B| $ in $ H $ such that for all edges 
$ xy\in M $ the pair $\left(N_H(y, A),N_H(x,B) \right)_H$
is not $ (\eps', \alpha, p) $-dense.
Then, there exists a matching $M'\subset M$ of size 
	$|M'|\geq \frac{\delta}{4}|M|$
such that $|A\setminus V(M')|\geq (1-\delta)|A| \geq \frac{1}{2} \eta n$, 
$|B\setminus V(M')|\geq (1-\delta)|B|\geq \frac{1}{2} \eta n$, and such that for all 
$xy\in M'$ we have
	\[
		|N_H(y,A\setminus V(M'))| \geq (1-\delta)|N_H(y,A)|
		 \ \tand\
		|N_H(x,B\setminus V(M'))| \geq (1-\delta)|N_H(x,B)|\,.
	\]
Indeed, choosing such a matching randomly by 
including every edge of $M$ independently with probability 
$\frac{\delta}{2}$ into $M'$, 
a simple application of Chernoff's inequality (using Theorem~\ref{thm:chernoff})
shows that the above occurs with probability $1-o(1)$.

Let $A':=A\setminus V(M')$ and $B':=B\setminus V(M')$. Then 
	$(A',B')_H$ is $(2\eps, \alpha, p)$-dense. Moreover,
	applying Lemma~\ref{supset},
	for every $xy\in M'$ the pair 
	$\left( N_H(y,A'), N_H(x,B') \right)_H$
	is not $(\eps'/2,\alpha,p)$-dense,
	since $\left(N_H(y, A),N_H(x,B) \right)_H$
	is not $ (\eps', \alpha, p) $-dense.

Now, fix an edge $ xy \in M' $ such that 
	$ (N_H(y,A'), N_H(x,B'))_H $ is not $ (\eps'/2, \alpha, p) $-dense. 
	It can be verified that there are subsets 
	$ A_y'\subseteq N_H(y,A') $ and 
	$ B_x' \subseteq N_H(x,B') $ of size precisely 
	$ \frac{\eps'}{4} \alpha p |A| $ and 
	$ \frac{\eps'}{4} \alpha p |B|$
	respectively, such that $ d_{H,p}(A_y',B_x')<\alpha - \eps'/2 $. 
Now let $ A_y $ and $ B_x $ be such that 
	$ A_y' \subseteq A_y \subseteq N_H(y,A') $ and 
	$ B_x' \subseteq B_x \subseteq N_H(x,B')$ with 
	$ |A_y|= \frac{1}{2}\alpha p |A| $ and 
	$ |B_x|= \frac{1}{2}\alpha p |B| $.
Then clearly $ (A_y, B_x) $ is not $ (\eps'', \alpha, p) $-dense. 
	We may thus find a family $\{(A_y, B_x)) : xy\in M'\} $ 
	of  pairs of subsets of mentioned size 
	such that these pairs are not $ (\eps'', \alpha, p) $-dense, 
	although $(A',B')_H$ is $(2\eps, \alpha, p)$-dense.

We will now show that a structure consisting of a graph $ H $, 
	disjoint sets $A'$, $B'$ of size at least $\frac{1}{2} \eta n$, a matching $M'$ and a family of pairs $(A_y, B_x)$ 
	as described above is unlikely to appear in $ G(n,p) $.

Since $(A',B')_H$ has to be $(2\eps, \alpha, p)$-dense, we have 
	$ e_H(A', B')\geq (\alpha - 2\eps)p |A'| |B'| $. 
	Thus, we can fix both $ A' $ and $ B' $ along with the edges of the 
	bipartite graph $ H[A',B'] $ in at most
	\[
 		\sum_{|A'||B'|\geq \frac{1}{2}\eta n} 
 			\binom{n}{|A'|} \binom{n}{|B'|}
 		\sum_{t\geq (\alpha -2\eps)p |A'||B'|}  \binom{|A'||B'|}{t}
	\]
ways. 

Note that for $ n $ chosen sufficiently large we have 
	\[
		p^2n\geq C^2\log n \geq \frac{2L}{\alpha \eta}.
	\]
Moreover, owing to the assumption we have 
	$ \frac{1}{2} \alpha p |A|, \frac{1}{2}\alpha p |B|
		\geq \frac{ \alpha p \eta n}{2} \geq L/p. $ 
Hence we can apply Theorem~\ref{thm:Gerke} to $ H[A',B'] $ and 
	infer that there are at most 
\[
	\left[\beta^{\frac{1}{2}\alpha p |A|} 
		\binom{|A'|}{\frac{1}{2}\alpha p |A|} 
		\binom{|B'|}{\frac{1}{2}\alpha p |B|}\right]^{|M'|}
	\leq \left[\beta^{\frac{1}{2}\alpha p |A|} 
		\binom{|A|}{\frac{1}{2}\alpha p |A|} 
		\binom{|B|}{\frac{1}{2}\alpha p |B|}\right]^{|M'|}
\]
	possibilities for choosing the pairs $(A_y, B_x) $ 
	ranging over all $ xy \in M' $,
	when we condition on $(A',B')$ being an 
	$(2\eps,\alpha,p)$-dense pair.
Combining the two above estimates we infer that the probability 
	of the above-described structure appearing in $ G(n,p) $ 
	is bounded from above by the number of choices for $M'$ 
	(easily bounded from above by $e^{n\log n}$) 
	multiplied by
\begin{align*}       
	& \sum_{|A'||B'|\geq \frac{1}{2}\eta n} 
		\binom{n}{|A'|} \binom{n}{|B'|} 
	  \sum_{t\geq (\alpha -2\eps)p |A||B|}  \binom{|A||B|}{t}p^t \\
	& \hspace{2cm} \times  
		\left[\beta^{\frac{1}{2}\alpha p |A|} 
		\binom{|A|}{\frac{1}{2}\alpha p |A|} 
		\binom{|B|}{\frac{1}{2}\alpha p|B|}
		\right]^{|M'|}p^{\frac{1}{2}\alpha p(|A|+|B|)|M'|} \\
    & \leq  
    	\sum_{|A'||B'|\geq \frac{1}{2}\eta n} 
    	\binom{n}{|A'|} \binom{n}{|B'|} 
		\sum_{t\geq (\alpha -2\eps)p |A||B|} 
		\left(\dfrac{|A||B|ep}{t}\right)^{t}\times 
		\left[\beta ^{\frac{1}{2}\alpha p|A|} 
		  \left(\frac{2e}{\alpha}\right)^{\frac{1}{2}\alpha p(|A|+|B|)}
		\right]^{|M'|}\\
 	 & \leq  
 	 	\sum_{|A'|,|B'|\geq \frac{1}{2}\eta n} 
 	 	\binom{n}{|A'|} \binom{n}{|B'|} 
	    \sum_{t\geq (\alpha -2\eps)p |A||B|} e^{p|A||B|} \times
		\left[\beta^{\frac{1}{2}\alpha} 
		 	\left(\frac{2e}{\alpha}\right)^{\frac{3}{2}\alpha}
		 	\right]^{p|A||M'|}\\
	 & \leq  
		\sum_{|A'||B'|\geq \frac{1}{2}\eta n} 
	 	\binom{n}{|A'|} \binom{n}{|B'|} 
		\sum_{t\geq (\alpha -2\eps)p |A||B|} 
			\left(e\beta ^{\frac{1}{2}\alpha \frac{\delta}{4}\gamma} 
				\left(\frac{2e}{\alpha}\right)^{\frac{3}{2}
				\alpha \frac{\delta}{4}\gamma} 	
			\right)^{p|A||B|}\\
	  &\leq 
	  	n^2\cdot 2^{2n}\cdot n^{2} \cdot 
	  		\left( \frac{e}{3} \right)^{n^{3/2}},
 \end{align*} 
where in the second inequality we used the fact that the function 
	$ f(t)= \big(|A||B|ep/t\big)^{t}$ is maximized at 
	$ t=p|A||B| $,
	in the third inequality we used that
	$|M'|\geq \frac{\delta}{4} |M| \geq \frac{\delta \gamma}{4} |B|$,
	and in the fourth inequality we used that 
	$\beta ^{\frac{1}{8}\alpha \delta\gamma} \left(\frac{2e}{\alpha}
		\right)^{\frac{3}{8}\alpha \delta\gamma}=1/3 $ and 
	$p|A||B|\geq n^{3/2}$ for large $n$.
	We conclude that the above-mentioned probability is $o(1)$.
\end{proof}

\section{Technical lemma}
\label{sec:tlemma}
In this section we state and prove the main technical lemma for the proof of Theorem~\ref{thm:main}.
For that we will need the following definition.
\begin{dfn}
	Let $H$ be a graph with disjoint vertex subsets 
	$A$, $B\subseteq V(H)$ and 
	let a set $\eps$, $\alpha$, $\nu$ of constants 
	as well as $p=p(n)>0$ be given.
	\begin{enumerate}[label=\alabel] 
	\item 
		An edge $wz\in E(H)$ is defined to be in 
		$\cR_H(A, B; \eps, \alpha, p)$ if 
		$(N_H(w,A), N_H(z,B))_H$ is $(\eps, \alpha, p)$-dense and 
		$\left|N_H(w,A)\right|\geq\alpha p\left|A\right|$, 
		$\left|N_H(z,B)\right|\geq\alpha p\left|B\right|$.
	\item 
		An edge $wz\in E(H)$ is defined to be in 
		$\cQ_H(A, B; \eps, \alpha, p, \nu)$ if
		\[
			\big|E_H\big(N_H(w,A), N_H(z,B)\big)
				\cap \cR_H(A,B; \eps, \alpha, p)\big|
			\geq 
			(1-\nu)\big|E_H\big(N_H(w,A), N_H(z,B)\big)\big|\,.
		\]
	\end{enumerate}
\end{dfn}

The following lemma will be applied repeatedly in the proof of the main result presented in Section~\ref{sec:pmain}.

\begin{lemma}
	\label{lem:pemb}
	For $\eps'$, $\alpha$, $\mu>0$ with $\eps' < \frac{\alpha}{2}$, 
	there exists $\eps>0$ with the property that 
	for all $\eta>0$ there exists $C>1$ such that for 
	$p\geq C(\log(n)/n)^{1/2}$ 
	a.a.s.\ $G=(V,E)\in G(n, p)$ satisfies the following.

	Suppose $H\subseteq G$ with vertex set 
	$V(H)=X\cup Y\cup A\cup B$ 	
	satisfies
	\begin{enumerate}[label=\rmlabel]
	\item\label{it:tl:1}  
		$X\subseteq N_G(v)$ and $Y\subseteq N_G(u)$ 
		for some vertices $v$, $u\in V$,
	\item\label{it:tl:2} 
		$X\cap Y=\emptyset$ and 
		$\left|X\right|, \left|Y\right|\geq\eta pn$
		and $|E_H(X,Y)| > \frac{\alpha}{2} p|X||Y|$,
	\item\label{it:tl:3} 
		$A\cap B=\emptyset$ and 
		$\eta n \leq |A|\leq |B|\leq 2|A|$, and
	\item\label{it:tl:4} 
		$(A,B)_H$ is $(\eps, \alpha, p)$-dense.
	\end{enumerate}
	If 
	$\left|E_H(X,Y)\cap \cR_{H}(A, B; \eps', \alpha, p)\right|
		\geq (1-\mu) |E_H(X,Y)|$,
	then 
	\[
		\big|E_H(X,Y)\cap \cQ_{H}(A, B; \eps', \alpha, p, \mu) \big|	
		\geq 
		(1-2\mu) \big|E_H(X,Y)\big|\,.
	\]
\end{lemma}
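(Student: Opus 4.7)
The goal is to bound $\bigl|E_H(X,Y) \setminus \cQ\bigr|$. Since the hypothesis already gives $\bigl|E_H(X,Y) \setminus \cR\bigr| \leq \mu|E_H(X,Y)|$, it suffices to prove $|\cR' \setminus \cQ| \leq \mu|E_H(X,Y)|$, where $\cR' := E_H(X,Y) \cap \cR$. The plan is a double count of $4$-cycles in $H$ with vertices $w \in X$, $a \in A$, $b \in B$, $z \in Y$ and edges $wa,ab,bz,wz$, enhanced by a matching bound to overcome a factor-$\Theta(1/(\mu^{2}\alpha^{4}))$ gap that pure $4$-cycle counting would leave open.

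For the lower bound, fix $wz \in \cR' \setminus \cQ$. The subbox $(N_H(w,A), N_H(z,B))$ is $(\eps',\alpha,p)$-dense with sizes $\geq \alpha p|A|$ and $\geq \alpha p|B|$, hence contains at least $(\alpha-\eps')\alpha^{2}p^{3}|A||B| \geq \tfrac{\alpha^{3}}{2}\,p^{3}|A||B|$ edges (since $\eps' < \alpha/2$), of which strictly more than a $\mu$-fraction lie outside $\cR$ by the failure of the $\cQ$ condition. Letting $\Lambda$ denote the set of $4$-cycles of the above shape with $wz \in \cR' \setminus \cQ$ and $ab \in \cB := E_H(A,B) \setminus \cR$, summing over $wz$ gives
\[
|\Lambda| \;>\; |\cR' \setminus \cQ|\cdot \mu \cdot \tfrac{\alpha^{3}}{2}\, p^{3}|A||B|.
\]

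The upper bound on $|\Lambda|$ proceeds in two steps. First, control $|\cB|$ via Lemma~\ref{lem:matching}: a standard first-moment argument based on $(\eps,\alpha,p)$-denseness of $(A,B)_{H}$ shows that at most $\eps|A|$ vertices of $A$ have cross-degree into $B$ below $(\alpha-2\eps)p|B|$, and symmetrically for $B$; discarding these exceptional vertices leaves a sub-pair that is still $(\eps'',\alpha,p)$-dense by Lemma~\ref{supset}. Applying Lemma~\ref{lem:matching} on this restricted pair with a small target parameter $\gamma = \gamma(\mu,\alpha,\eta)$ bounds the largest matching of bad $AB$-edges by $\gamma|B|$, and combining with the degree bound $\Delta(G) \leq 2pn$ from Lemma~\ref{lem:gnp}~\ref{it:gnp:1} via a K\"onig-type inequality yields $|\cB| \leq O(\gamma/\eta)\,p|A||B|$. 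Second, write $|\Lambda| \leq \sum_{ab \in \cB} c_{ab}$ with $c_{ab} := \bigl|E_H(X \cap N_H(a),\,Y \cap N_H(b))\bigr|$ the number of $4$-cycles through $ab$, and use a uniform per-edge bound $c_{ab} \leq O(p^{3}|X||Y|)$ valid a.a.s.\ in $G(n,p)$---a straightforward variant of Lemma~\ref{lem:gnp}~\ref{it:gnp:4} proved by the same exceptional-vertex decomposition---to conclude $|\Lambda| \leq O(\gamma/\eta)\,p^{4}|X||Y||A||B|$.

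Combining the two bounds for $|\Lambda|$ and substituting $|E_H(X,Y)| > \tfrac{\alpha}{2}\,p|X||Y|$ from the hypothesis yields
\[
|\cR' \setminus \cQ| \;\leq\; O\!\bigl(\gamma/(\mu\alpha^{3}\eta)\bigr)\,p|X||Y| \;\leq\; \mu|E_H(X,Y)|,
\]
once $\gamma$ is chosen of order $\mu^{2}\alpha^{4}\eta$; the value of $\eps$ is then forced by Lemma~\ref{lem:matching}'s quantification. The main obstacle is the coordination of the parameters $\eps,\eps',\gamma,\eta$ so that (i) Lemma~\ref{lem:matching} applies on the restricted pair with the target $\gamma$, (ii) the pair retains denseness after restriction, and (iii) the uniform per-edge $4$-cycle estimate is available; without the matching-based reduction of $|\cB|$, the bare double count is too weak by precisely the factor $\Theta(1/(\mu^{2}\alpha^{4}))$ noted above.
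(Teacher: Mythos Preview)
Your high-level strategy is exactly the paper's: double-count $4$-cycles $wabz$ with $wz\in\cR'\setminus\cQ$ and $ab\notin\cR_H$, obtain the lower bound from the definition of $\cQ$, and control the upper bound via Lemma~\ref{lem:matching} and K\H onig's theorem. The difference lies only in how you extract the $\gamma$-saving in the upper bound, and that is where your argument breaks.

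Your claimed ``uniform per-edge bound $c_{ab}\le O(p^{3}|X||Y|)$'' is false. For a fixed edge $ab$ the sets $X\cap N_H(a)$ and $Y\cap N_H(b)$ have size at most $(1+\delta)p^{2}n$ (by Lemma~\ref{lem:gnp}\ref{it:gnp:1}, since $X\subseteq N_G(v)$ and $Y\subseteq N_G(u)$), which at the threshold $p\asymp(\log n/n)^{1/2}$ is only of order $\log n$. No edge-concentration statement is available between sets of that size; in fact $p^{3}|X||Y|$ can be $o(1)$ while $c_{ab}$ is a nonnegative integer that is positive for many $ab$. An exceptional-vertex decomposition in the style of Lemma~\ref{lem:gnp}\ref{it:gnp:4} does not help here, because that proof crucially uses Lemma~\ref{lem:gnp}\ref{it:gnp:2}, which requires one of the two sets to have size $\Omega(pn)$; neither $X\cap N_H(a)$ nor $Y\cap N_H(b)$ is that large. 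Consequently your product bound $|\cB|\cdot\max_{ab} c_{ab}$ loses an uncontrolled factor and the argument does not close.

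The fix is to skip the intermediate bound on $|\cB|$ altogether. Once K\H onig's theorem hands you a cover $A'\cup B'$ of size at most $\gamma|B|$ for the bad edges, every $4$-cycle in $\Lambda$ passes through $A'$ or through $B'$. Enlarging $A'$ to some $A''\subseteq A$ of size exactly $\gamma|B|\ge\gamma\eta n$, Lemma~\ref{lem:gnp}\ref{it:gnp:4} applies directly to $(X,Y,A'',B)$ (and symmetrically to $(X,Y,A,B'')$), because $A''$ now has \emph{linear} size. This gives $|\Lambda|\le 2p^{4}|X||Y||A''||B|+2p^{4}|X||Y||A||B''|\le 6\gamma p^{4}|X||Y||A||B|$, which is precisely the paper's upper bound and exactly the saving you were aiming for. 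In short: use the cover at the vertex level, not the edge level.
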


\begin{proof}
Let $\eps'$, $\alpha$, and $\mu>0$ be given with $\eps'<\frac{\alpha}{2}$. 
We fix the auxiliary constant
\begin{equation}\label{eq:tl-gamma}
	\gamma = \frac{1}{50}\alpha^4\mu^2\,.
\end{equation}
For this choice of $\gamma$, $\alpha$, and $\eps'>0$,
Lemma~\ref{lem:matching} yields a constant $\eps>0$, 
which in turn we use for the intended~$\eps$ for Lemma~\ref{lem:pemb}.
Having fixed $\eps>0$, we receive $\eta>0$ and applying 
Lemma~\ref{lem:matching} with the same $\eta$ yields some $C'>1$. Moreover, we apply Lemma~\ref{lem:gnp} with 
\begin{equation}\label{eq:tl-delta}
	\delta=\gamma\eta
\end{equation} 
to obtain some $C''>1$ and we let $C$ be the maximum of 
$C'$ and $C''$.

For $p>C(\log(n)/n)^{1/2}$ and sufficiently large $n$ 
let $H\subseteq G=(V,E)$, where $G\in G(n,p)$, 
and suppose $V(H)=X\cup Y\cup A\cup B$ with 
properties~\ref{it:tl:1}--\ref{it:tl:4} holding. 
To simplify notation we set
\[
	\cR_H=\cR_{H}(A, B; \eps', \alpha, p)
	\qand
	\cQ_H=\cQ_{H}(A, B; \eps', \alpha, p, \mu)\,.
\]
Furthermore, towards a contradiction we assume that 
$|E_H(X,Y)\cap \cR_H|\geq (1-\mu)|E_H(X,Y)|$, but 
\[
	\big|E_H(X,Y)\cap \cQ_{H}\big| < (1-2\mu) \big|E_H(X,Y)\big|\,.
\]
In particular, we have 
\[
 	\big|\big(E_H(X,Y)\cap \cR_{H}\big)\setminus \cQ_{H}\big|
	\geq 
	\mu\big|E_H(X,Y)\big|
	\overset{\text{\ref{it:tl:2}}}
	\geq
	\mu\frac{\alpha}{2}p|X||Y|
	\,. 
\]
In other words, at least $\mu\frac{\alpha}{2}p|X||Y|$ 
edges $xy\in E_H(X,Y)\setminus \cQ_H$ have the property 
that the neighbourhoods $N_H(y,A)$ and $N_H(x,B)$ have size 
at least $\alpha p|A|$ and $\alpha p|B|$, 
respectively, and the pair $(N_H(y,A),N_H(x,B))_H$ is 
$(\eps',\alpha,p)$-dense. Therefore,
\[
	\Big|E_H\big(N_H(y,A),N_H(x,B)\big)\Big|
	\geq 
	(\alpha-\eps')p\big|N_H(y,A)\big|\big|N_H(x,B)\big|
	\geq
	\frac{\alpha^3}{2}p^3|A||B|\,.
\]
However, since $xy\not\in\cQ_H$ at least a $\mu$-fraction 
of those edges are not in $\cR_H$, i.e. 
there exists a subset 
\[
	E'_{xy}\subseteq E_H\big(N_H(y,A),N_H(x,B)\big)\setminus \cR_H
		\quad\text{with}\quad
	\big|E'_{xy}\big|\geq \mu\frac{\alpha^3}{2}p^3|A||B|\,.
\]

Now, Lemma~\ref{lem:matching} along with K\H onig's theorem 
for matchings in bipartite graphs tells us that there are
subsets $A'\subseteq A$ and $B'\subseteq B$ with
$\big|A'\cup B'\big|\leq \gamma |B|$
such that $A'\cup B'$ is a vertex cover for 
$\bigcup E_{xy}'$, where the union is taken over all
$xy\in (E_H(X,Y)\cap \cR_H)\setminus \cQ_H$.
For convenience we fix some supersets 
$A''$ and $B''$ of size $\gamma |B|$ with 
$A'\subseteq A''\subseteq A$ and $B'\subseteq B''\subseteq B$.

Seeing all the edges $xy\in \big(E_H(X,Y)\cap\cR_H\big)\setminus \cQ_H$,
we conclude that there are at least
\begin{align}\label{eq:tl-contra}
	\frac{1}{2} \big|\big(E_H(X,Y)\cap \cR_{H}\big)\setminus \cQ_{H}
		\big| \cdot \big|E'_{xy}\big|
	 & \overset{\phantom{\eqref{eq:tl-gamma}}}{\geq}
	\frac{1}{2} \mu\frac{\alpha}{2}p|X||Y|
		\cdot\mu\frac{\alpha^3}{2}p^3|A||B| \nonumber \\
	& \overset{\eqref{eq:tl-gamma}}{>}
	6\gamma p^4 |X||Y| |A||B|
\end{align}
$4$-cycles $xyabx$ in $H\subseteq G$ with 
$x\in X$, $y\in Y$, $a\in A$, and 
$b\in B$, where $a\in A'$ or $b\in B'$.

On the other hand, Lemma~\ref{lem:gnp}~\ref{it:gnp:4} 
applied to  $X$, $Y$, $A''$, and $B$ (see also~\eqref{eq:tl-delta}),
asserts that  there are at most $2 p^4 |X||Y| |A''||B|$ such cycles
passing through $A'\subseteq A''$ and, similarly, 
there are at most $2p^4 |X||Y| |A||B''|$ such cycles 
passing through $B'\subseteq B''$. 
Owing to $|A''|=|B''|=\gamma|B|\leq 2\gamma |A|$,
it follows that there are at most $6\gamma p^4 |X||Y| |A||B|$
such $4$-cycles, which contradicts~\eqref{eq:tl-contra} 
and concludes the proof.
\end{proof}

By a similar argument we may ignore the conditions $ (i) $ and $ (ii) $ 
in Lemma \ref{lem:pemb} and consider the sets $ X $ and $ Y $ to be of order $ \Omega (n) $.
This way we obtain the following version of Lemma~\ref{lem:pemb}.
\begin{lemma}\label{cor:pemb}
	For all $\eps', \alpha, \mu>0$, there exists $\eps>0$ such that 
	for all $\eta>0$ there exists $C>1$ such that for 
	$p\geq C(\log n/n)^{1/2}$ 
	a.a.s.\ $G=(V,E)\in G(n, p)$ satisfies the following.

	Suppose $H\subseteq G$ with vertex set $V(H)=X\cup Y\cup A\cup B$ 
	satisfies
	\begin{enumerate}[label=\rmlabel]
	\item 
		$X\cap Y=\emptyset$ and 
		$\left|X\right|, \left|Y\right|\geq\eta n$ and 
		$|E_H(X,Y)| > \frac{\alpha}{2} p|X||Y|$,
	\item 
		$A\cap B=\emptyset$ and 
		$\left|A\right|, \left|B\right|\geq\eta n$, and
	\item 
		$(A,B)_H$ is $(\eps, \alpha, p)$-dense.
	\end{enumerate}
	If 
	$\left|E_H(X,Y)\cap \cR_{H}(A, B; \eps', \alpha, p)\right|
		\geq (1-\mu) |E_H(X,Y)| $,
	then 
	\[
		\left|E_H(X,Y)\cap \cQ_{H}(A, B; \eps', \alpha, p, \mu) \right|	
		\geq (1-2\mu) |E_H(X,Y)|.
	\]
\end{lemma}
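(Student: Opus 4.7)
The plan is to mirror the proof of Lemma~\ref{lem:pemb} almost line for line, adapting only the two places where its hypotheses $X\subseteq N_G(v)$, $Y\subseteq N_G(u)$ and the ratio $|B|\leq 2|A|$ were used. As before I set $\gamma=\tfrac{1}{50}\alpha^4\mu^2$, extract $\eps$ from Lemma~\ref{lem:matching} applied with parameters $(\gamma,\alpha,\eps')$, and for the given $\eta$ choose $C$ large enough that Lemmas~\ref{lem:matching} and~\ref{lem:gnp} (the latter with $\delta\approx\gamma\eta$) both apply a.a.s.\ to $G\in G(n,p)$.

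Assuming for contradiction that $|E_H(X,Y)\cap\cR_H|\geq (1-\mu)|E_H(X,Y)|$ while $|E_H(X,Y)\cap\cQ_H|<(1-2\mu)|E_H(X,Y)|$, the deduction from Lemma~\ref{lem:pemb} transfers verbatim: one produces at least $\tfrac{\mu\alpha}{2}p|X||Y|$ edges $xy\in(E_H(X,Y)\cap\cR_H)\setminus\cQ_H$ and, for each such $xy$, a set $E'_{xy}\subseteq E_H(N_H(y,A),N_H(x,B))\setminus\cR_H$ of size at least $\tfrac{\mu\alpha^3}{2}p^3|A||B|$. Together these give at least $6\gamma p^4|X||Y||A||B|$ four-cycles $xyabx$ with $xy\in(E_H(X,Y)\cap\cR_H)\setminus\cQ_H$ and $ab\in E'_{xy}$.

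The first adaptation concerns invoking Lemma~\ref{lem:matching} in the absence of the bound $|B|\leq 2|A|$. Assuming $|A|\leq|B|$ without loss of generality, I would partition $B$ into $\lceil|B|/|A|\rceil$ blocks $B_1,\dots,B_k$ of size between $|A|/2$ and $|A|$; each restricted pair $(A,B_i)$ inherits an $(O(\eps),\alpha,p)$-denseness from $(A,B)$ up to a small fraction, so Lemma~\ref{lem:matching} applies to $(B_i,A)$ (with the two sides swapped so the ratio hypothesis is met) and delivers a vertex cover of the non-$\cR_H$ edges in $(A,B_i)$ of size at most $\gamma|A|$. Unioning these covers gives a single cover $A'\dcup B'$ for the non-$\cR_H$ edges in $(A,B)$ of total size $O(\gamma|B|)$; padding produces supersets $A''\subseteq A$ and $B''\subseteq B$ through which every bad $4$-cycle must pass.

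The second and main adaptation is the $4$-cycle upper bound itself. Since $X$ and $Y$ are no longer contained in single vertex-neighbourhoods, Lemma~\ref{lem:gnp}~\ref{it:gnp:4} is unavailable; on the other hand, all four sets are now of size $\Omega(n)$, which is comfortably enough to run a direct Chernoff-plus-union-bound argument — parallel to the proofs of parts~\ref{it:gnp:1}--\ref{it:gnp:2} of Lemma~\ref{lem:gnp} — showing that in $G\in G(n,p)$ the number of $4$-cycles $xyabx$ with $x\in X$, $y\in Y$, $a\in A''$, $b\in B$ is at most $2p^4|X||Y||A''||B|$, and analogously with $B''$ in place of $A''$. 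Combining the two and using the size accounting from the previous step produces an upper bound of $O(\gamma)\,p^4|X||Y||A||B|$ which, after tuning the absolute constants so that $\gamma$ is small enough, contradicts the lower bound of $6\gamma p^4|X||Y||A||B|$. The main obstacle is to pin down this dense $4$-cycle count cleanly, since it is not packaged as a statement in Section~\ref{sec:gnp} and one must uniformly control the small exceptional set of atypical $a,b,x,y$ whose neighbourhoods meet the prescribed sets in the wrong density, so as to keep the estimate valid for every relevant choice of $A''$ and $B''$.
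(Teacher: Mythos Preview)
Your approach is the paper's: the proof of Lemma~\ref{cor:pemb} there consists only of the sentence ``by a similar argument'' referring to Lemma~\ref{lem:pemb}, so your sketch, which identifies precisely the two places needing adaptation, is already more explicit than the original.

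Two technical remarks. First, your block partition for the missing ratio hypothesis $|B|\leq 2|A|$ degrades the denseness parameter of $(A,B_i)$ to $\eps\cdot |B|/|B_i|=O(\eps/\eta)$ rather than $O(\eps)$, and since the quantifier order in Lemma~\ref{cor:pemb} fixes $\eps$ before $\eta$, this does not quite close; the same obstruction appears if one tries to rerun Lemma~\ref{lem:matching} with the ratio bound $2$ replaced by $1/\eta$, because the constant $\beta$ there, and hence $\eps_0$ from Theorem~\ref{thm:Gerke}, would then depend on $\eta$. In the paper's sole application of Lemma~\ref{cor:pemb} the pair $(A,B)$ already satisfies $|B|\leq 2|A|$, so the issue is moot there. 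Second, the $4$-cycle count over four sets each of size $\Omega(n)$ is not a direct Chernoff statement, since the cycles share edges and the indicators are not independent; but your closing sentence already points to the correct route, namely an exceptional-vertex deduction from parts~\ref{it:gnp:1}--\ref{it:gnp:3} of Lemma~\ref{lem:gnp}, exactly parallel to how part~\ref{it:gnp:4} is derived there, and indeed simpler since all four sets are large.
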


\section{Proof of the main Result}\label{sec:pmain}
In this section, we prove our main result, Theorem~\ref{thm:main}.

\begin{proof}
The proof consists of three parts. 
	In the first part we fix all constants needed for the proof.
	In the second part we assume that $ G\in G(n,p) $ and 
	$ H\subseteq G $ satisfy $ |V(H)|=n $ and $e(H)\geq\alpha' e(G)$,
	so as to find a suitable dense pair $(A,B)$ in $H$ 
	among which we aim to embed the grid $ G_{s, s} $. 
	Here the Sparse Regularity Lemma will play a key role.
	In the last part we find the embedding.

\medskip

\noindent \textbf{Constants}. 
Let $ \alpha'>0 $ be given. First we define 
	the constants $\mu$, $\alpha$, and $\eps'$ of 
	Lemmas~\ref{lem:pemb} and \ref{cor:pemb}. We set 
	\begin{eqnarray}\label{main.c1}
		\alpha = \alpha'/20, \quad 
\mu = \alpha / 100
	\end{eqnarray}
	as well as
	\begin{eqnarray}\label{main.c2}
		\eps'=\min \{\alpha/4, 1/2 \}
	\end{eqnarray}
	and let $ \eps_{1}>0 $ and $ \eps_{2}>0 $ 
	be as guaranteed by Lemma~\ref{lem:pemb} and Lemma~\ref{cor:pemb}, 	
	respectively. Further, we fix 
	\begin{eqnarray}\label{main.c3}
		\gamma =\frac{\alpha \mu}{8}.
	\end{eqnarray}
	Applying Lemma~\ref{lem:matching} with 
	$ \gamma, \alpha, \eps' $ as defined yields a constant
	$ \eps_{3}>0 $.
	Next we set 
	\begin{eqnarray}\label{main.c4}
		\eps=
		 \min\left\{\eps_{1},\eps_{2}, \eps_{3}, 
		 	\frac{\alpha}{3}, 1/4\right\},~~
	K=2~~ \qand t_{0}=\frac{2}{\alpha}.
	\end{eqnarray}
	Let $ T_{0},\lambda$ and $ N_{0} $ be the constants guaranteed 
	by the Sparse Regularity Lemma (Theorem~\ref{reg thm}) 
	corresponding to $\frac{\eps}{2}, K,$ and $t_{0} $ given above. 
	We set
	\begin{eqnarray}\label{main.c5}
		\eta=\frac{1}{2T_{0}}.
	\end{eqnarray}
Having $ \eps_{1},\eps_{2}, \eps_{3}>0 $
	and applying Lemmas~\ref{lem:pemb},
	\ref{cor:pemb} and \ref{lem:matching} respectively 
	with this $\eta $ yields some constants
	$ C_{1}, C_{2}$ and $ C_{3} $.
	Moreover, let
	\begin{eqnarray}\label{main.c6}
		\delta =\min\{1/10 , \mu\alpha^3\eta^2 \},
	\end{eqnarray}
	and apply Lemma~\ref{lem:gnp} with
	\[
		\delta'=\min\{\eps \alpha \delta \eta,\lambda\} ,
	\] 
	obtaining some constant $ C_{4}>1 $.
	We set $ C:=\max \{C_{1}, C_{2}, C_{3}, C_{4} \} $. 
	Finally, we set
	\begin{equation}\label{main.c7}
		c=\delta\alpha\eta /16
	\end{equation}
	and consider $n$ to be large enough whenever necessary.

\noindent \textbf{Finding a good dense pair.} 
Let $ G=(V,E)\in G(n,p) $, where  
	$ p>C(\log n/n)^{1/2} $. 
	We condition on $G$ satisfying the properties from Lemma~\ref{lem:gnp}.
	Assume that $ H$ is an $ n $ vertex subgraph of $ G$ with 
	$ e(H)\geq \alpha'e(G) $.

	We apply the Sparse Regularity Lemma (Theorem~\ref{reg thm}) 
	with $\eps/2$, $K=2$, $t_0$, and $ p $ to $ H $. 
Note that, owing to Lemma~\ref{lem:gnp}~\ref{it:gnp:3}, the graph $ G $
	and hence $ H\subseteq G $ are $ (\lambda, K)$-bounded. 
Theorem~\ref{reg thm} therefore yields a constant $T_0$ and an $ (\eps/2 ,p)$-regular partition
	$ \{V_{i}\}_{i=0}^{t} $ of $V(H)$ with 
	$ t_{0}\leq t \leq T_{0}. $

	By Lemma~\ref{lem:gnp}~\ref{it:gnp:1} the exceptional set touches 
	at most $2pn|V_0|\leq \eps pn^2$ edges. 	
	Also by Lemma~\ref{lem:gnp}~\ref{it:gnp:3}
	there are at most $2p|V_i||V_j|\leq 2p (n/t)^2$ edges
	between non-regular pairs $(V_i,V_j)$ with  $1\leq i<j\leq t$ and
	inside each of the partition sets $V_i$ there are at most 
	$2p\binom{|V_i|}{2}\leq p (n/t)^2$ edges.
	Thus the number of edges in $H$ 
	both inside the partition sets and between non-regular pairs 		
	is bounded from above by
	\[
		\eps pn^2 + \frac{\eps}{2} \binom{t}{2} \cdot 2p 
			\left(\frac{n}{t}\right)^2 
		+ t \cdot p\left(\frac{n}{t}\right)^2
		< 
		\left(\frac{3}{2}\eps + \frac{1}{t_0} \right) pn^2 
		\stackrel{(\ref{main.c4})}{<} \frac{\alpha'}{8} pn^2 \ .
	\]
	By (\ref{main.c6}) and Lemma~\ref{lem:gnp}~\ref{it:gnp:3} 
	we have $e(H)\geq \alpha' e(G) \geq \alpha'pn^2/4$. 
	Hence, the number of edges lying in $ (\eps/2 ,p)$-regular 
	pairs is at least $\alpha'pn^2/8$.\\
Averaging now guarantees the existence of an 
	$ (\eps/2 ,p)$-regular pair $(V_i,V_j)$ such that
	\[
		e_H(V_i,V_j) 
		\geq \frac{\frac{\alpha'}{8} pn^2}{\binom{t}{2}} 
		> \frac{\alpha'}{4} p |V_i||V_j|\ .
	\]
	Thus, $(A_1,B_1):=(V_i,V_j)$ is $(\eps/2, \alpha'/4, p)$-dense.

	Next, we will discard vertices of too small or too large 
	degree inside the pair $(A_1,B_1)$. First, set  
	\begin{align*}
		A_2 & :=\left\{v\in A_1:\ 
			|N_H(v,B_1)|< \frac{\alpha'}{8} p|B_1|\right\},\\
	 	B_2 & :=\left\{v\in B_1:\ 
			|N_H(v,A_1)|< \frac{\alpha'}{8} p|A_1|\right\}.
	\end{align*}
	Then $|A_2|\leq \frac{\eps}{2} |A_1|$ and 
	$|B_2|\leq \frac{\eps}{2} |B_1|$, 
	as $(A_1,B_1)$ is $(\eps/2, \alpha'/4, p)$-dense.
	Next set
	\begin{align*}
		A_2' & :=\left\{v\in A_1:\ 
			|N_H(v,B_1)|> (1+\delta) p|B_1|\right\},\\
		B_2' & :=\left\{v\in B_1:\ 
			|N_H(v,A_1)|> (1+\delta) p|A_1|\right\}.
	\end{align*}
	We then observe that 
	$\left|A_2'\right|
		\leq \frac{7}{(\delta')^3 p} 
		< \frac{\eps}{4}|A_1|				
	$	
for large $n$, by applying Lemma~\ref{lem:gnp}~\ref{it:gnp:1b}
	and using that 
	$|A_1|\geq \frac{(1-\eps)n}{T_0}>\eta n> \delta' n$
	by the choice of $\eta$ and $\delta'$.
	Similarly, $\left|B_2'\right| < \frac{\eps}{4}|B_1|$ holds.
	Finally, set
	\begin{align*}
	A_3 & :=\left\{v\in A_1\setminus (A_2\cup A_2'):\ 
		|N_H(v,B_2\cup B_2')|\geq \frac{\alpha'}{16} p|B_1|\right\},\\
	B_3 & :=\left\{v\in B_1\setminus (B_2\cup B_2'):\ 
		|N_H(v,A_2\cup A_2')|\geq \frac{\alpha'}{16} p|A_1|\right\}.
	\end{align*}
	Then, considering an arbitrary set 
	$\tilde{B}\supset B_2\cup B_2'$
	with $|\tilde{B}|=\eps |B_1|\geq \eps \eta n > \delta' n$,
	and observing that by definition every $v\in A_3$ satisfies 
	\[
		|N_H(v,\tilde{B})|\geq \frac{\alpha'}{16}p|B_1|>3p|\tilde{B}|
	\]
	by the choice of $\eps$ in (\ref{main.c4}),
	Lemma~\ref{lem:gnp}~\ref{it:gnp:1b} ensures that
	$|A_3|\leq \frac{7}{(\delta')^3 p}<\frac{\eps}{4}|A_1|$. 
	Analogously, $|B_3|<\frac{\eps}{4}|B_1|$ holds. 			
	Now set 
	$$A:=A_1\setminus (A_2\cup A_2'\cup A_3)
	~~ \text{and} ~~	
	B:=B_1\setminus (B_2\cup B_2'\cup B_3)~ .$$ 
	By the choice of $ \eta $ in (\ref{main.c5}) and 
for large enough $n$
	we have
	$|A|\geq (1-\eps)|A_1| \geq \eta n$ and 
	$|B|\geq (1-\eps)\left|B_1\right|\geq\eta n$. Also, since 
	$(A_1,B_1)$ is $(\eps/2, \alpha'/4, p)$-dense,
	the pair $(A,B)_H$ is $(\eps, \alpha, p)$-dense and, 
	by the definition of $A$ and $B$, we obtain that for every vertex
	$v\in A$ we have 
	\begin{equation}\label{main.c8}
		|N_H(v,B)|
		\geq \frac{\alpha'}{8} p|B_1| 
			- \frac{\alpha'}{16} p|B_1| - |B_3| 
		> \alpha p |B|
	\end{equation}
	and for every vertex $v\in B$ we have 
	$|N_H(v,A)|\geq \alpha p |A|$.
	Without loss of generality let $|A|\leq |B|$ and note that $|B|\leq 2|A|$ holds.

\noindent \textbf{Embedding} $G_{s,s}$.
	Recall that $s=\frac{c}{p}$.
	From now on, we fix the pair $(A,B)_H$
and aim to embed the grid $G_{s,s}$
	iteratively in the bipartite graph $ H[A,B] $. Let
	\[	
		\cR_H :=\cR_{H}(A, B; \eps', \alpha, p) 
		\quad\text{and}\quad
		\cQ_H:=\cQ_{H}(A, B; \eps', \alpha, p, \mu) .
	\]
Towards this purpose, we say that a sequence of paths 
	$(P_1,\ldots,P_t)$ in $H$ \textit{produces a copy} of $G_{s,t}$ in $H$ 
	if $|V(P_i)|=s$ holds for every $i\in [t]$,
	and if between each of the pairs $(P_i,P_{i+1})$ with $i\in [t-1]$
	there exists a matching $M_i$ in $H$ such that
	$\bigcup_{i\in [t]} E(P_i) \cup \bigcup_{i\in [t-1]} M_i$
	induces a copy of $G_{s,t}$.
	
	We now prove the following inductively for every $t\in [s]$:
	there exists a sequence $(P_1,\ldots,P_t)$ of paths in $H[A,B]$ 
	such that the following is true:
	
	\begin{enumerate}[label={(P\arabic*)}]
	\item\label{it:P1} $(P_1,\ldots,P_t)$ produces a copy 
		of $G_{s,t}$ on some vertex set $S\subset A\cup B$,
	\item\label{it:P2} $E(P_t)\subset \cR_H\cap \cQ_H$,
	\item\label{it:P3} for every $v\in V(P_t)\cap A$ we have 
		$|N_H(v,B\setminus S)|\geq (1-\delta)|N_H(v,B)|$,
	\item\label{it:P4} for every $v\in V(P_t)\cap B$ we have 
		$|N_H(v,A\setminus S)|\geq (1-\delta)|N_H(v,A)|$.
	\end{enumerate}

\smallskip

\noindent
{\it Induction start:} 
Since $(A,B)_H$ forms an $(\eps,\alpha,p)$-regular pair, we have
	\[
		|E_H(A,B)|\geq (\alpha - \eps)p|A||B|\,.
	\] 
	In particular, 
	we claim that
	\[ 
		|E_H(A,B)\cap \cR_H| \geq (1-\mu)|E_H(A,B)|. 
	\]
	Indeed, if this were not true, we would have 
	$|E_H(A,B)\setminus \cR_H| > \frac{\alpha\mu}{2} p|A||B|$.
	However, recalling the definition of $A$ and $B$, we can bound the maximum degree
	\[
		\Delta(H[A,B]) 
		\leq 
		(1+\delta)p\cdot\max\{|A_1|,|B_1|\} 
		\leq 
		\Big(\frac{1+\delta}{1-\eps}\Big)p\cdot\max\{|A|,|B|\} 
		< 4p|A|
	\]
and we would then find a matching of size at least
	$\frac{\alpha \mu p|A||B|}{8p|A|} \geq \gamma |B|$
	in $E(H)\setminus \cR_H$, 
	which contradicts Lemma~\ref{lem:matching}.

	Applying Lemma~\ref{cor:pemb} (with $X=A$ and $Y=B$)
	we now obtain that
	\[ 
		|E_H(A,B)\cap (\cR_H\cap \cQ_H)| \geq (1-3\mu)|E_H(A,B)| .
	\]
	Therefore, on average the vertices in $A\cup B$ 
	are incident with at least
	\begin{equation*}
		\frac{2(1-3\mu)|E_H(A,B)|}{|A\cup B|} 
		\geq \frac{\frac{\alpha}{2}p|A|^{2}}{3|A|}
		\geq \frac{\alpha \eta p n}{6} 
> \frac{2c}{p}
	\end{equation*}
	edges from $E_H(A,B)\cap \cR_H\cap \cQ_H$.
Thus, we can find a path $P_1$ with 
	\begin{equation}\label{eq:defs}
		s=\frac{c}{p}
	\end{equation}
	vertices, 
	consisting of edges in $\cR_H\cap \cQ_H$ only,
	which gives the properties~\ref{it:P1} and~\ref{it:P2} for $t=1$.
	By (\ref{main.c7}) and (\ref{main.c8}) 
	for every $v\in V(P_1)\cap A$ we have 
	\[
		|N_H(v,B\setminus V(P_{1}))|
		\geq |N_H(v, B)| - c/p \geq (1-\delta)|N_H(v,B)|.
	\]
	Hence property~\ref{it:P3} and then similarly property~\ref{it:P4} follow.
	
	\smallskip
	
	\noindent
	{\it Induction step:} 
	Assume we have found a sequence $(P_1,\ldots,P_t)$ 
	satisfying (P1)-(P4) with $t<s$.
	We aim to extend the sequence by another path $P_{t+1}$.
	Let $\{x_1,x_2,\ldots,x_s\}$ denote the vertices of $P_t$ 
	with $x_ix_{i+1}\in E(P_t)$ for every $i\in [s-1]$.
	In order to find a path $P_{t+1}$, 
	we first fix suitable candidate sets $X_i$
	for embedding the unique neighbour of each $x_i$ 
	on the path $P_{t+1}$.
	We want these candidate sets to be pairwise disjoint, 
	which is why in the following we
	exclude from the neighbourhoods $N_H(x_i)$ the intersections 
	$N_i$ with all the other relevant neighbourhoods.
	Moreover, we want every vertex in $X_i$ to have a suitable
	degree outside the set $S=\bigcup_{i\in [s]} V(P_i)$
	to be able to extend the embedding of the grid later on;
	thus we also exclude a set $S_i$ which contains 
	those vertices that have too large degrees towards
	$S$.
		
	Without loss of generality, we may suppose that $x_{i}\in B$ and $x_{i+1}\in A$.
	Note that $(N_H(x_i,A),N_H(x_{i+1},B))_H$ 
	is $(\eps',\alpha,p)$-dense for every $i\in [s-1]$ since
	$x_ix_{i+1}\in \cR_H$. 
	For every $i\in [s]$, we now consider
	\begin{align*}
		N_i & := N_H(x_i,A\setminus S)\cap 
			\bigg( \bigcup_{j\neq i} N_H(x_j,A) \bigg)\,,\\
		S_i & := \left\{v\in  N_H(x_i,A):\ 
			|N_H(v,S)|>\frac{\delta}{2} \alpha p |A|\right\}\,,\\
		X_i & := N_H(x_i,A\setminus S)\setminus (N_i\cup S_i)\,.
	\end{align*}
	Note that this notation depends on whether or not $ x_i\in A $ 
	or $ x_i\in B $. We have defined them on the assumption that 
	$ x_i \in B $. 
	If $ x_i \in A $, then one should replace $ A $ by $ B $.
	
	Applying Lemma~\ref{lem:gnp}~\ref{it:gnp:1}, property~\ref{it:P3}, and~\eqref{main.c8}, we have
	\[
		|N_i|\leq s\cdot 2p^2n 
		\overset{\eqref{eq:defs}}{=}
		2cpn 
		\overset{\eqref{main.c7}}{=} 
		\frac{\delta\alpha\eta}{8}pn
		<
		\frac{\delta}{2}\left|N_H(x_i,A\setminus S)\right|.
	\] 
	Considering an arbitrary set 
	$\tilde{S}\supset S$
	with $|\tilde{S}|=\frac{\delta\alpha\eta}{4} n\geq \delta' n$,
	and observing that by definition every $v\in S_i$ satisfies 
	\[
		|N_H(v,\tilde{S})|\geq |N_H(v,S)|
			\geq \frac{\delta}{2}\alpha p |A|
			> \frac{\delta}{2}\alpha \eta pn
			=2p|\tilde{S}|,
	\]
	Lemma~\ref{lem:gnp}~\ref{it:gnp:1b} together with property~\ref{it:P3} and
	(\ref{main.c8}) ensures
	\[
		|S_i|\leq \frac{7}{(\delta')^3 p}< \frac{\delta}{2}|N_H(x_i,A\setminus S)|
	\]
	for sufficiently large~$n$. Thus,
	\[
		\left|X_i\right|
		\geq \left(1-\delta \right)\left|N_H(x_i,A\setminus S)\right|.
	\]
	Moreover, all the sets $X_i$ are pairwise disjoint,
	the pairs $(X_i, X_{i+1})$ are $(2\epsilon', \alpha, p)$-dense 
	for every $i\in [s-1]$
	and for every $v\in X_i$ we know that 
	\[
		|N_H(v, B\setminus S)|>
			\left(1-\delta \right)|N_H(v, B)|.
	\]
	Therefore, properties~\ref{it:P3} and~\ref{it:P4} will hold,
	once we manage to find a path $P_{t+1}$ 
	with one vertex from each $X_i$,
	consisting of edges from $\cR_H\cap \cQ_H$ only. 
	Let $i\in [s-1]$. As $x_ix_{i+1}\in \cQ_H$, we have
	\begin{eqnarray*}
		\left|E_H(N_H(x_i,A), N_H(x_{i+1},B))\cap \cR_H\right|
			\geq (1-\mu)|E_H(N_H(x_i,A), N_H(x_{i+1},B))|
	\end{eqnarray*}
	and by applying Lemma~\ref{lem:pemb}	
	we obtain
	\begin{eqnarray*}
		\left|E_H(N_H(x_i,A), N_H(x_{i+1},B))\cap 
			(\cR_H\cap \cQ_H)\right|
		\geq (1-3\mu)|E_H(N_H(x_i,A), N_H(x_{i+1},B))|.
	\end{eqnarray*}
	Moreover, using that $x_ix_{i+1}\in \cR_H$ and (\ref{main.c8}), 
	we get
	\begin{align}\label{main.c9}
		|E_H(N_H(x_i,A), N_H(x_{i+1},B))| 
			& \geq (\alpha - \eps')p |N_H(x_i,A)||N_H(x_{i+1},B)| 		
					\nonumber \\
			& \geq \frac{\alpha^2 \eta}{2} p^2 n
					\cdot \max\{|N_H(x_i,A)|,|N_H(x_{i+1},B)|\}.
	\end{align}	
	By Lemma~\ref{lem:gnp}~\ref{it:gnp:1} 
	every vertex in $N(x_i,A)$ has at most $2p^2n$ neighbours in 			$N(x_{i+1},B)$, and vice versa.
	Combining this with (\ref{main.c9}) we then know that 
	\begin{align*}
		|E_H(N_H(x_i,A), N_H(x_{i+1},B))| - |E_H(X_i,X_{i+1})| 
			& \leq  
			2 p^2n (\delta |N_H(x_i,A)|+ \delta |N_H(x_{i+1},B)|)\\ 
			& \leq  
			4\delta p^2n \max\{|N_H(x_i,A)|,|N_H(x_{i+1},B)|\}\\
			& <  
			\mu |E_H(N_H(x_i,A), N_H(x_{i+1},B))| 
	\end{align*}
	owing to the choice of $\delta$ in (\ref{main.c6}).
	Thus, we conclude that many edges in $E_H(X_i,X_{i+1})$ 
	belong to $\cR_H\cap  \cQ_H$ in the sense that
	\begin{eqnarray}\label{main.c10}
		\left|E_H(X_i,X_{i+1})\cap (\cR_H\cap \cQ_H)\right|
			\geq (1-4\mu)|E_H(N_H(x_i,A), N_H(x_{i+1},B))|\ .
	\end{eqnarray}
	In order to find the desired path we now prove a 
	slightly stronger statement:
	in every set~$X_i$ at least half of its vertices can be reached 
	from~$X_1$ via a path in $\cR_H\cap \cQ_H$. For this purpose, 			
	we iteratively define
	\[
		X_i':=
		\begin{cases}
			X_1 & \text{if }i=1,\\
			\left\{v\in X_i\colon\, \exists w\in X_{i-1}'\ 
				\text{s.t. }vw\in \cR_H\cap \cQ_H\right\} 
				& \text{if }i>1\ .	 
		\end{cases}
	\]
	By induction, we then show that $|X_i'|\geq \frac{1}{2}|X_i|$ 
	for every $i\in [s]$.
	Note that once this is proven, we are done, as we can then take a 
	path $P_{t+1}$ consisting of one vertex from every~$X_i'$ 
	and edges from $\cR_H\cap \cQ_H$ only,
	such that all of the properties~\ref{it:P1}-\ref{it:P4} are satisfied.

	The case $i=1$ is trivial. So, let $i>1$ and assume for 
	a contradiction that $\tilde{X}_i:=X_i\setminus X_i'$ has size at 
	least $\frac{1}{2}|X_i|$.
	By definition of~$X_i'$ we have 
	$E_H(X_{i-1}',\tilde{X}_i)\cap (\cR_H\cap \cQ_H) = \varnothing$ 
	and thus
	\begin{align*}
		|E_H(X_{i-1}', \tilde{X}_i)| 
			\leq |E_H(X_{i-1},X_i)\setminus (\cR_H\cap \cQ_H)|
		& \leq 4\mu \left|E_H(N(x_{i-1},B), N(x_i,A))\right| \\
		& \leq 5\mu p\left|N(x_{i-1},B)\right|\left|N(x_i,A)\right|,
	\end{align*}
	where in the second inequality we use (\ref{main.c10}),
	and where in the last inequality we apply 
	Lemma~\ref{lem:gnp}~\ref{it:gnp:2}.
	However, as	$(X_{i-1}, X_i)$ is $(2\epsilon', \alpha, p)$-dense
	and $|X_{i-1}'|\geq \frac{1}{2}|X_{i-1}|$ (by induction) 
	and also $|\tilde{X}_i|\geq \frac{1}{2}|X_i|$ (by assumption),
	we must have
	\begin{align*}
	|E_H(\tilde{X}_i, X_{i-1}')| 
		 \geq (\alpha-2\epsilon')p
		 	\left|\tilde{X}_i\right|\left|X_{i-1}'\right|
		& \geq \frac{\alpha-2\epsilon'}{4}p
			\left|X_i\right|\left|X_{i-1}\right|\\
		& \geq \frac{\alpha-2\epsilon'}{4}(1-\delta)^2 p
			\left|N(x_{i-1},B\setminus S)\right| \left|N(x_i,A\setminus S)\right|\\
		& \geq \frac{\alpha-2\epsilon'}{4}(1-\delta)^4 p
			\left|N(x_{i-1},B)\right| \left|N(x_i,A)\right|\\
		& > 5\mu p\left|N(x_{i-1},B)\right|\left|N(x_i,A)\right|,
	\end{align*} 
	a contradiction. Hence, 
	$\left|X_i'\right|\geq\frac{1}{2}\left|X_i\right|$	 
	for every $i\in [s]$.
	\end{proof}

\subsection*{Acknowledgement} We are grateful to Thomas Lesgourgues for pointing out an inaccuracy in an earlier version of the manuscript.

\begin{bibdiv}
\begin{biblist}

\bib{AC88}{article}{
   author={Alon, N.},
   author={Chung, F. R. K.},
   title={Explicit construction of linear sized tolerant networks},
   booktitle={Proceedings of the First Japan Conference on Graph Theory and
   Applications (Hakone, 1986)},
   journal={Discrete Math.},
   volume={72},
   date={1988},
   number={1-3},
   pages={15--19},
   issn={0012-365X},
   review={\MR{975519}},
   doi={10.1016/0012-365X(88)90189-6},
}

\bib{ABHKP}{article}{
   author={P. Allen},
   author={J B\"ottcher},
   author={H. H\`{a}n},
   author={Y. Kohayakawa},
   author={Y. Person},
   title={Blow-up lemmas for sparse graphs},
eprint={1612.00622},
   note={Submitted},
}

\bib{B83}{article}{
   author={Beck, J\'ozsef},
   title={On size Ramsey number of paths, trees, and circuits.~I},
   journal={J. Graph Theory},
   volume={7},
   date={1983},
   number={1},
   pages={115--129},
   issn={0364-9024},
   review={\MR{693028}},
   doi={10.1002/jgt.3190070115},
}

\bib{B90}{article}{
   author={Beck, J\'ozsef},
   title={On size Ramsey number of paths, trees and circuits.~II},
   conference={
      title={Mathematics of Ramsey theory},
   },
   book={
      series={Algorithms Combin.},
      volume={5},
      publisher={Springer, Berlin},
   },
   date={1990},
   pages={34--45},
   review={\MR{1083592}},
   doi={10.1007/978-3-642-72905-8\_4},
}

\bib{Bo-book}{book}{
   author={Bollob\'as, B\'ela},
   title={Random graphs},
   series={Cambridge Studies in Advanced Mathematics},
   volume={73},
   edition={2},
   publisher={Cambridge University Press, Cambridge},
   date={2001},
   pages={xviii+498},
   isbn={0-521-80920-7},
   isbn={0-521-79722-5},
   review={\MR{1864966}},
   doi={10.1017/CBO9780511814068},
}

\bib{CJKMMRR}{article}{
   author={Clemens, Dennis},
   author={Jenssen, Matthew},
   author={Kohayakawa, Yoshiharu},
   author={Morrison, Natasha},
   author={Mota, Guilherme Oliveira},
   author={Reding, Damian},
   author={Roberts, Barnaby},
   title={The size-Ramsey number of powers of paths},
   journal={J. Graph Theory},
   volume={91},
   date={2019},
   number={3},
   pages={290--299},
   doi={10.1002/jgt.22432},
}

\bib{CN}{article}{
   author={Conlon, David},
   author={Nenadov, Rajko},
   title={Size Ramsey numbers of triangle-free graphs with bounded degree},
   note={Preprint},
}

\bib{Dell}{article}{
   author={Dellamonica, Domingos, Jr.},
   title={The size-Ramsey number of trees},
   journal={Random Structures Algorithms},
   volume={40},
   date={2012},
   number={1},
   pages={49--73},
   issn={1042-9832},
   review={\MR{2864652}},
   doi={10.1002/rsa.20363},
}

\bib{DP}{article}{
   author={Dudek, Andrzej},
   author={Pra\l at, Pawe\l },
   title={An alternative proof of the linearity of the size-Ramsey number of
   paths},
   journal={Combin. Probab. Comput.},
   volume={24},
   date={2015},
   number={3},
   pages={551--555},
   issn={0963-5483},
   review={\MR{3326432}},
   doi={10.1017/S096354831400056X},
}

\bib{DP-SIAM}{article}{
   author={Dudek, Andrzej},
   author={Pra\l at, Pawe\l },
   title={On some multicolor Ramsey properties of random graphs},
   journal={SIAM J. Discrete Math.},
   volume={31},
   date={2017},
   number={3},
   pages={2079--2092},
   issn={0895-4801},
   review={\MR{3697158}},
   doi={10.1137/16M1069717},
}

\bib{Er81}{article}{
   author={Erd\H os, P.},
   title={On the combinatorial problems which I would most like to see
   solved},
   journal={Combinatorica},
   volume={1},
   date={1981},
   number={1},
   pages={25--42},
   issn={0209-9683},
   review={\MR{602413}},
   doi={10.1007/BF02579174},
}

\bib{EFRS}{article}{
   author={Erd\H os, P.},
   author={Faudree, R. J.},
   author={Rousseau, C. C.},
   author={Schelp, R. H.},
   title={The size Ramsey number},
   journal={Period. Math. Hungar.},
   volume={9},
   date={1978},
   number={1-2},
   pages={145--161},
   issn={0031-5303},
   review={\MR{479691}},
   doi={10.1007/BF02018930},
}

\bib{FP}{article}{
   author={Friedman, J.},
   author={Pippenger, N.},
   title={Expanding graphs contain all small trees},
   journal={Combinatorica},
   volume={7},
   date={1987},
   number={1},
   pages={71--76},
   issn={0209-9683},
   review={\MR{905153}},
   doi={10.1007/BF02579202},
}

\bib{MR2278123}{article}{
   author={Gerke, Stefanie},
   author={Kohayakawa, Yoshiharu},
   author={R\"odl, Vojt\v ech},
   author={Steger, Angelika},
   title={Small subsets inherit sparse $\varepsilon$-regularity},
   journal={J. Combin. Theory Ser. B},
   volume={97},
   date={2007},
   number={1},
   pages={34--56},
   issn={0095-8956},
   review={\MR{2278123}},
   doi={10.1016/j.jctb.2006.03.004},
}

\bib{HK}{article}{
   author={Haxell, P. E.},
   author={Kohayakawa, Y.},
   title={The size-Ramsey number of trees},
   journal={Israel J. Math.},
   volume={89},
   date={1995},
   number={1-3},
   pages={261--274},
   issn={0021-2172},
   review={\MR{1324465}},
   doi={10.1007/BF02808204},
}

\bib{HKL}{article}{
   author={Haxell, P. E.},
   author={Kohayakawa, Y.},
   author={\L uczak, T.},
   title={The induced size-Ramsey number of cycles},
   journal={Combin. Probab. Comput.},
   volume={4},
   date={1995},
   number={3},
   pages={217--239},
   issn={0963-5483},
   review={\MR{1356576}},
   doi={10.1017/S0963548300001619},
}

\bib{janson2011random}{book}{
   author={Janson, Svante},
   author={\L uczak, Tomasz},
   author={Rucinski, Andrzej},
   title={Random graphs},
   series={Wiley-Interscience Series in Discrete Mathematics and
   Optimization},
   publisher={Wiley-Interscience, New York},
   date={2000},
   pages={xii+333},
   isbn={0-471-17541-2},
   review={\MR{1782847}},
   doi={10.1002/9781118032718},}

\bib{JKOP}{article}{
   author={Javadi, Ramin},
   author={Khoeini, Farideh},
   author={Omidi, Gholam Reza},
   author={Pokrovskiy, Alexey},
   title={On the size-Ramsey number of cycles},
   journal={Comb. Probab. Comput. (2019)},
note={Online version},
   doi={10.1017/S0963548319000221}
}

\bib{MR1661982}{article}{
   author={Kohayakawa, Y.},
   title={Szemer\'edi's regularity lemma for sparse graphs},
   conference={
      title={Foundations of computational mathematics},
      address={Rio de Janeiro},
      date={1997},
   },
   book={
      publisher={Springer, Berlin},
   },
   date={1997},
   pages={216--230},
   review={\MR{1661982}},
}

\bib{MR1980964}{article}{
   author={Kohayakawa, Y.},
   author={R\"odl, V.},
   title={Regular pairs in sparse random graphs. I},
   journal={Random Structures Algorithms},
   volume={22},
   date={2003},
   number={4},
   pages={359--434},
   issn={1042-9832},
   review={\MR{1980964}},
   doi={10.1002/rsa.10081},
}

\bib{MR1952989}{article}{
   author={Kohayakawa, Y.},
   author={R\"odl, V.},
   title={Szemer\'edi's regularity lemma and quasi-randomness},
   conference={
      title={Recent advances in algorithms and combinatorics},
   },
   book={
      series={CMS Books Math./Ouvrages Math. SMC},
      volume={11},
      publisher={Springer, New York},
   },
   date={2003},
   pages={289--351},
   review={\MR{1952989}},
   doi={10.1007/0-387-22444-0\_9},
}

\bib{KRSSz}{article}{
   author={Kohayakawa, Yoshiharu},
   author={R\"odl, Vojt\v ech},
   author={Schacht, Mathias},
   author={Szemer\'edi, Endre},
   title={Sparse partition universal graphs for graphs of bounded degree},
   journal={Adv. Math.},
   volume={226},
   date={2011},
   number={6},
   pages={5041--5065},
   issn={0001-8708},
   review={\MR{2775894}},
   doi={10.1016/j.aim.2011.01.004},
}

\bib{Le16}{article}{
   author={Letzter, Shoham},
   title={Path Ramsey number for random graphs},
   journal={Combin. Probab. Comput.},
   volume={25},
   date={2016},
   number={4},
   pages={612--622},
   issn={0963-5483},
   review={\MR{3506430}},
   doi={10.1017/S0963548315000279},
}

\bib{MR796186}{article}{
   author={R\"odl, V.},
   author={Duke, R. A.},
   title={On graphs with small subgraphs of large chromatic number},
   journal={Graphs Combin.},
   volume={1},
   date={1985},
   number={1},
   pages={91--96},
   issn={0911-0119},
   review={\MR{796186}},
   doi={10.1007/BF02582932},
}

\bib{RSz}{article}{
   author={R\"odl, Vojt\v ech},
   author={Szemer\'edi, Endre},
   title={On size Ramsey numbers of graphs with bounded degree},
   journal={Combinatorica},
   volume={20},
   date={2000},
   number={2},
   pages={257--262},
   issn={0209-9683},
   review={\MR{1767025}},
   doi={10.1007/s004930070024},
}

\bib{MR540024}{article}{
   author={Szemer\'edi, Endre},
   title={Regular partitions of graphs},
   language={English, with French summary},
   conference={
      title={Probl\`emes combinatoires et th\'eorie des graphes},
      address={Colloq. Internat. CNRS, Univ. Orsay, Orsay},
      date={1976},
   },
   book={
      series={Colloq. Internat. CNRS},
      volume={260},
      publisher={CNRS, Paris},
   },
   date={1978},
   pages={399--401},
   review={\MR{540024}},
}

\end{biblist}
\end{bibdiv}

\end{document}